\documentclass[12pt]{article}
\usepackage[utf8]{inputenc}
\usepackage[normalem]{ulem}
\usepackage[blocks]{authblk}
\useunder{\uline}{\ul}{}
\usepackage{ dsfont }
\usepackage{amsthm}
\usepackage{amsmath}
\usepackage{ amssymb }
\usepackage{tikz}
\usetikzlibrary{decorations.pathreplacing,calligraphy}
\usepackage{caption}
\usepackage{subcaption}
\usepackage{url}
\usepackage{graphicx}
\usepackage{ upgreek }
\usepackage{mathrsfs}
\usepackage{dirtytalk}
\usepackage{hyperref}
\usepackage{geometry}
 \geometry{
 a4paper,
 total={170mm,257mm},
 left=20mm,
 top=20mm,
 }
\usepackage{multicol}

\newtheorem{theorem}{Theorem}[section]
\newtheorem{remark}[theorem]{Remark}
\newtheorem{definition}[theorem]{Definition}
\newtheorem{proposition}[theorem]{Proposition}
\newtheorem{lemma}[theorem]{Lemma}
\newtheorem{example}[theorem]{Example}

\begin{document}
\title{Non-equilibrium steady states with a spatial Markov structure}
\author[1]{Frank Redig
\thanks{Email: \texttt{F.H.J.Redig@tudelft.nl}}
}
\author[2]{Berend van Tol
\thanks{Email: \texttt{B.T.vanTol@tudelft.nl}}
}

\affil[1,2]{Institute of Applied Mathematics, Delft University of Technology, Delft,
The Netherlands}
\date{\today}

\maketitle
\begin{abstract}
We investigate the structure of non-equilibrium steady states (NESS) for a class of exactly solvable models in the setting of a chain with left and right reservoirs.
Inspired by recent results on the harmonic model \cite{carinci2023large},
\cite{davide2}, we focus on models in which the NESS is a mixture of equilibrium product measures, and where the probability measure which describes the mixture has a spatial Markovian property. We completely characterize the structure of such mixture measures, and show that under natural scaling and translation invariance properties, the only possible mixture measures are coinciding with the Dirichlet process found in \cite{carinci2023large} in the context of the harmonic model.
\end{abstract}
\section{Introduction}
The structure of the stationary state of stochastic interacting particle systems driven out of equilibrium, for instance via boundary driving, is a challenging problem, and very few exactly solvable models are available.
A famous example of an exactly solvable model in this context is the boundary driven exclusion process, solvable via the matrix ansatz solution, see \cite{derrida}, \cite{derrida2001free}.

Recently, new explicit representations of non-equilibrium steady states (NESS) were discovered in the context of the boundary driven KMP model 
\cite{KMP}, \cite{bertini2007stochastic}, \cite{davide1}, and in the context of the boundary driven harmonic model \cite{carinci2023large},
\cite{davide2}.
For a general class of mass transport models, including both the KMP and the harmonic model on general graphs, such representations were found via intertwining in 
\cite{giardina2024intertwining}.
The integrable structure of the harmonic model was discovered in \cite{frassek2022exact}, and explicit formulas for the factorial moments in the  non-equilibrium stationary state are given.
These explicit representations of the NESS allow to obtain very detailed information about large deviations, long-range correlations and other properties of NESS that are believed to hold for a large class of models. 

\subsection{The non-equilibrium steady state as a mixture}
For all the models mentioned above (KMP, harmonic), the equilibrium distributions are product. E.g.\ for the KMP model the equilibrium measures are products of exponentials with identical (i.e., site independent) parameters. 
An important development is the discovery that for all these models the NESS can be written as a stochastic mixture of product measures with equilibrium marginals. E.g.\ for the KMP model on a chain with $n$ sites coupled to reservoirs at both ends, with left reservoir parameter $\theta_L$ and right reservoir parameter $\theta_R>\theta_L$, the NESS reads
\begin{equation}\label{kmpness}
\mu_{\theta_L,\theta_R}^{(n)}[d\eta_1,\ldots, d\eta_n]
= \int \left(\otimes_{i=1}^n \nu_{\theta_i}[d\eta_i]\right)
\Lambda^n_{\theta_L,\theta_R}(d\theta_1,\ldots d\theta_n).
\end{equation}
Here $\nu_{\theta_i}$ is an exponential distribution with parameter (i.e., expectation) $\theta_i$, and
$\Lambda^n_{\theta_L,\theta_R}$, the so-called mixture measure, is a probability measure on $[\theta_L,\theta_R]^n$ concentrating on ordered
$n$-tuples
$\theta_L\leq \theta_1\leq\ldots\leq \theta_n\leq \theta_R$.
This mixture measure describes the distribution of the ``hidden parameters'' $(\theta_1,\ldots,\theta_n)$.
We then recover the equilibrium product measures of the KMP model when $\theta_L=\theta_R=\theta^*$ because in that case the mixture measure $\Lambda^n_{\theta_L,\theta_R}$ becomes
a Dirac measure concentrating on $\theta^*$.

Moreover, the mixture measure $\Lambda^n_{\theta_L,\theta_R}$ in \eqref{kmpness}
is in turn the stationary measure of a Markovian dynamics on the space of parameters $(\theta_1,\ldots,\theta_n)\in [\theta_L,\theta_R]^n$.
This dynamics of the parameters is then called the ``hidden temperature'' model \cite{davide1} or in more general setting, the ``hidden parameter'' model \cite{giardina2024intertwining}.

In the context of the (discrete or continuous) harmonic model, a representation as in \eqref{kmpness} exists, and moreover, in that context, the measure
$\Lambda^n_{\theta_L,\theta_R}$ is available in closed form and given by the joint distribution of the order statistics of $n$ independent uniforms on the interval $[\theta_L,\theta_R]$ (here we assume $\theta_L<\theta_R$). 
In \cite{carinci2023large}, a class of generalized harmonic models is introduced, where also the mixture measure $\Lambda^n_{\theta_L,\theta_R}$ is explicit and given by an ordered Dirichlet distribution.
Other models in which the NESS can be written as a stochastic mixture of product measures include the boundary-driven symmetric exclusion process \cite{floreani2023non} and the boundary-driven inclusion process \cite{giardina2024intertwining}.
In both these models (as well as in the KMP model), the mixture measure cannot be obtained explicitly as it is the case for the harmonic models (on the chain), but it can only be characterized implicitly as the stationary measure of the hidden parameter model.
\subsection{The use of mixture representations of a NESS }

A representation of the NESS of type \eqref{kmpness}
with explicit knowlegde of the mixture measure 
is very useful and allows to derive many macroscopic properties 
about the density profile in the NESS.
More precisely, the explicit knowledge of the mixture measure $\Lambda^n_{\theta_L,\theta_R}(d\theta_1,\ldots d\theta_n)$ allows to  prove the large deviation principle for the density profile
$\frac1n \sum_{i=1}^n \eta_i\delta_{i/n}$ under the NESS, as already outlined in \cite{bertini2007stochastic}. 
Conditional on a realization of the hidden parameters $(\theta_1,\ldots,\theta_n)$, the measure $\mu_{\theta_L,\theta_R}^{(n)}$ is a product measure
$\otimes_{i=1}^n \nu_{\theta_i}$ for which the computation of the rate function for
the large deviations of the density profile is easy, and yields a function of the empirical profile of the parameters
$\frac1n\sum_{i=1}^n \theta_i \delta_{i/n}$. So once one can obtain the large deviation principle of
$\frac1n\sum_{i=1}^n \theta_i \delta_{i/n}$ under the 
measure $\Lambda^n_{\theta_L,\theta_R}$, one obtains a variational expression for the rate function controlling the large deviations of the density profile. This variational expression naturally leads to the non-locality of the large deviation rate function,
a property which is expected to be generic for non-equilibrium steady states. The representation \eqref{kmpness} also naturally leads to explicit formulas for correlation functions and cumulants, relating them to correlation functions of order statistics, which are known explicitly. In particular, we can recover the long-range character of the covariance in the NESS, and we can also obtain natural scaling properties of cumulants. 

As an additional by-product of a representation of the form \eqref{kmpness}, one can prove local equilibrium and have control on the (mesoscopic) deviations from local equilibrium. 

In that sense, it is very relevant to obtain models in which the NESS can be written in the form \eqref{kmpness} with an explicit form for the mixture measure $\Lambda^n_{\theta_L,\theta_R}$.

\subsection{The spatial Markov property of the hidden parameter model}
In \cite{giardina2024intertwining} we obtain a probabilistic understanding of the fact that for the harmonic models the mixture measure can be obtained explicitly. We show that under the measure
$\Lambda^n_{\theta_L,\theta_R}$, the conditional distribution of $\theta_i$ given all $\theta_j, j\not= i$, is uniform on $[\theta_{i-1}, \theta_{i+1}]$, which is exactly the Markov structure of the joint distribution of order statistics of independent uniforms. 

The spatial Markov structure of the measure
$\Lambda^n_{\theta_L,\theta_R}$ implies that one can reconstruct $\Lambda^n_{\theta_L,\theta_R}$ from
$\Lambda^1_{\theta_L,\theta_R}$.  More precisely, knowing the invariant measure for the hidden parameter model for a single-site system with left and right reservoirs is sufficient to obtain full knowledge of $\Lambda^n_{\theta_L,\theta_R}$ for all system sizes $n\in\mathbb{N}$. Finding this invariant measure for the system with a single site is usually an easy problem. Then, as a consequence of the structure of the NESS, one can find $\mu_{\theta_L,\theta_R}^{(n)}[d\eta_1,\ldots, d\eta_n]$
for all $n\in\mathbb{N}$.

As we showed in \cite{carinci2023large}, this spatial Markov structure of the measure
$\Lambda^n_{\theta_L,\theta_R}$ also leads naturally to the additivity principle for the pressure and the corresponding large deviation rate function (cf. \cite{bodineau2004current}). Therefore,  we can think of the spatial Markov structure 
of the mixture measure $\Lambda^n_{\theta_L,\theta_R}$ as a microscopic counterpart of the additivity principle \cite{bodineau2004current}.

\subsection{Content and organization of the paper}

In this paper, we focus
on the characterization and the precise consequences of the Markov structure of the mixture measure
$\Lambda^n_{\theta_L,\theta_R}$.

As a first result, we prove that in such a setting
the measures $\Lambda^n_{\theta_L,\theta_R}$
are completely determined by the system consisting of a single site with left and right reservoirs, i.e., the measures
$\Lambda^1_{\theta_L,\theta_R}$. As a second result, we characterize those single-site systems which are ``extendable'', i.e.,  give rise to a Markov family $\{\Lambda^n_{\theta_L,\theta_R}, n\in\mathbb{N}\}$.
Finally, as a third result, under natural assumptions of scaling and translation invariance, we prove that the only possible measures
$\Lambda^n_{\theta_L,\theta_R}$ are the ordered Dirichlet distributions which appeared in the generalized harmonic models \cite{carinci2023large}.

The measures $\Lambda^n_{\theta_L,\theta_R}$ are concentrated on the set of ordered $n$-tuples
$\theta_L<\theta_1<\theta_2<\ldots<\theta_n<\theta_R$. Because of this ordering restriction, the standard theory of Markov specifications from 
\cite{georgii} chapter 10 and 11, or the Hammersley-Clifford theorem \cite{Hamm} is not directly applicable as both assume non-null specifications. We can think of the measures
$\Lambda^n_{\theta_L,\theta_R}$
as the joint distribution of a Markov bridge, i.e., a joint distribution of the form
\[
X^{(n)}_1=\cdot,\ldots, X^{(n)}_{n}=\cdot|X^{(n)}_0=\theta_L, X^{(n)}_{n+1}=\theta_R
\]
where $X^{(n)}_1=\cdot,\ldots, X^{(n)}_{n}$ are the first $n$ steps of a Markov process. However, because of the ordering restriction, 
this Markov bridge depends on $n$ and therefore cannot be thought of as the conditioning of an infinite-volume Gibbs measure with nearest neighbor interaction.

The rest of our paper is organized as follows. In section 2 we define the two-sided Markov property and a notion of symmetry for the measures $\Lambda^n_{\theta_L,\theta_R}$. In section 3 we study their support properties. In section 4 we provide a way to reconstruct the measure $\Lambda^n_{\theta_L,\theta_R}$ corresponding to $n$ sites from the measure corresponding to a single site. Finally, in section 5, we find two properties concerning the shift and scale invariance of the measures to characterize the Dirichlet distributions.

\section{Structure of the $n$-site densities}
In this section we first introduce the necessary notation and give a formal definition of the Markovian structure of the mixture measure $\Lambda^n_{a,b}$. We also define a notion of symmetry which we will assume throughout this text. Next we prove that the Markov property implies a product structure for the $n$-site density and subsequently investigate consequences of the symmetry property.\\

\noindent
Let $a,b \in \mathbb{R}^+$. We consider a family of probability density functions $\{\Lambda_{a,b}^n\}_{n \geq 1}$ corresponding to random vectors taking values in $(I_{a,b})^n$, where $I_{a,b} = (a \wedge b, a \vee b)$. That is, for fixed $a$, $b$ and $n$ we have random variables $\Theta_{1,n},...,\Theta_{n,n}$ with joint density $\Lambda^n_{a,b}$:

\begin{equation}
    \mathbb{P}_n((\Theta_{1,n} , ..., \Theta_{n,n}) \in A) = \int_A \Lambda_{a,b}^n d\lambda_n,
\end{equation}

\noindent
for each Borel set $A \in \mathscr{B}((I_{a,b})^n)$ and $n \geq 1$ and where $\lambda_n$ denotes the Lebesgue measure on $\mathbb{R}^n$. We assume that, for all values of $a,b \in  \mathbb{R}^+$ and $n \geq 1$, we have a corresponding density $\Lambda_{a,b}^n$. This then defines the map $\Lambda^n: (a,b)\rightarrow \Lambda_{a,b}^n$ and we refer to this map as the $n$-site density. Moreover, we refer to $\Lambda_{a,b}^n$ as the  the $n$-site density with left parameter $a$ and right parameter $b$.\\

\noindent
Throughout this text we often use the densities of marginals and conditional densities of the random variables $\Theta_{i,n}$, $1\leq i \leq n$, which we denote as follows,

\begin{equation}
    \Lambda_{a,b}^{n,i}(\theta_i)
:= \int_{-\infty}^\infty ...\int_{-\infty}^\infty \Lambda_{a,b}^n(\theta_1, ...,\theta_i, ..., \theta_n) d\theta_1 ... d\theta_{i-1} d\theta_{i+1}...d\theta_n
\end{equation}

for the $i$-th marginal and, for $\theta_i \in \text{supp } (\Lambda_{a,b}^{n,i})$,

\begin{equation}
    \Lambda_{a,b}^n(\theta_1, ..., \theta_n|\Theta_i = \theta_i) := \frac{\Lambda_{a,b}^n(\theta_1, ...,\theta_i, ..., \theta_n)}{\Lambda_{a,b}^{n,i}(\theta_i)}.
\end{equation}

\noindent
We now define what it means for the family $\Lambda^n$ to be two-sided Markov. For notational simplicity we omit the right index $n$ in $\Theta_{1,n},...,\Theta_{n,n}$ whenever $n$ is fixed, i.e. we write $\Theta_1,...,\Theta_n$.


\begin{definition}
\label{def:sym_two_sided}
 The family $\{\Lambda^n\}_{n \geq 1}$ is called

 \begin{enumerate}
     \item  \textbf{Two-sided Markov} if the corresponding densities $\Lambda_{a,b}^n$, $n \geq 1$ and $a,b \in \mathbb{R}^+$,
satisfy the following two properties
     \begin{itemize}
         \item[(i)] Two-sided Markov property:

         \begin{equation}
         \label{eq:two-sided_Markov_def}
            \Lambda_{a,b}^n(\theta_1, ..., \theta_n|\Theta_j = \theta_j) = \Lambda_{a,\theta_j}^{j-1}(\theta_1,..., \theta_{j-1}) \cdot \Lambda_{\theta_j,b}^{n-j}(\theta_{j+1},...,\theta_n)
        \end{equation}

        for all $\theta_1,..., \theta_n \in I_{a,b}$. Here we define $\Lambda_{a,b}^0 := 1$.\\

        \item[(ii)] Support restriction:  $\Lambda_{a,b}^1$ has support $I_{a,b}$. Here we define the support of a function $f:X \rightarrow \mathbb{R}$ as $\text{\normalfont supp } (f) := \{x \in X: f(x) \neq 0\}$.
        
     \end{itemize}

    \item \textbf{Symmetric} if, for all $n \geq 1$, $a,b \in \mathbb{R}^+$ and $\theta_1,...,\theta_n \in I_{a,b}$

     \begin{equation}
     \label{eq:symm_def}
         \Lambda_{a,b}^n(\theta_1,...,\theta_n) = \Lambda_{b,a}^n(\theta_n,...,\theta_1).
     \end{equation}
 \end{enumerate}

\end{definition}

\begin{remark}
    In this paper we are interested in densities $\Lambda^n_{a,b}$ which concentrate on the set of ordered $n$-tuples $a<\theta_1<...<\theta_n<b$. This is guaranteed via the requirement on the support of $\Lambda^1$, as we will prove in proposition \ref{prop:supp_lambdas}.
\end{remark}

\begin{example}
\label{ex:order_stat_twosidedmarkov}
\normalfont
Consider the joint probability density of the order statistics of $n$ i.i.d.\ uniform random variables on $(a,b)$ with $a \leq b$. More precisely, for $n\geq1$ we consider $X_1,X_2,...,X_n \sim U(a,b)$ and denote the corresponding order statistics by $\Theta_{1} = X_{1:n}, \Theta_{2} = X_{2:n},...,\Theta_{n} = X_{n:n}$. The joint distribution of $\Theta_1,...,\Theta_n$ is given by  

\begin{equation}
    \Lambda_{a,b}^n(\theta_1,...,\theta_n) = n! \cdot |b-a|^{-n} \mathds{1}(a < \theta_1 < \theta_2 < ... < \theta_n < b).
\end{equation}

The marginals are given by 

\begin{equation}
\label{eq:marginals_order_stat}
    \Lambda_{a,b}^{n,i}(\theta) = \frac{n!}{(i-1)!(n-i)!}|b-a|^{-n} |b-\theta|^{n-i}|\theta-a|^{i-1}\mathds{1}(\theta \in I_{a,b}).
\end{equation}

The two-sided Markov property now follows from 

\begin{equation}
    \Lambda_{a,b}^n(\theta_1,...,\theta_n| \Theta_i=\theta_i) = \frac{\Lambda_{a,b}^n(\theta_1,...,\theta_n)}{\Lambda_{a,b}^{n,i}(\theta_i)} = \frac{(i-1)!(n-i)!}{|b-\theta_i|^{n-i}|\theta_i-a|^{i-1}} \mathds{1}(a < \theta_1 < \theta_2 < ... < \theta_n < b)
\end{equation}

and

\begin{equation}
    \Lambda_{a,\theta_i}^{i-1}(\theta_1,...,\theta_{i-1}) \cdot \Lambda_{\theta_i,b}^{n-i}(\theta_{i+1},...,\theta_n) = \frac{(i-1)!(n-i)!}{|b-\theta_i|^{n-i}|\theta_i-a|^{i-1}} \mathds{1}(a < \theta_1 < \theta_2 < ... < \theta_n < b).
\end{equation}

\noindent
We generalize the definition of the order statistics for the case $b \leq a$ in such a way that the family $\{\Lambda^n\}_{n\geq1}$ becomes symmetric. I.e. we define

\begin{equation}
    \Lambda_{a,b}^n(\theta_1,...,\theta_n) =
    \begin{cases}
        n! \cdot |b-a|^{-n} \mathds{1}(a < \theta_1 < \theta_2 < ... < \theta_n < b) \quad \text{if } a < b\\
        n! \cdot |b-a|^{-n} \mathds{1}(a > \theta_1 > \theta_2 > ... > \theta_n >b) \quad \text{if } a > b
    \end{cases}
\end{equation}

\noindent
One can show the two-sided Markov property for the case $a>b$ in the same way as is done above for $a<b$.

\end{example}

\subsection{Product form for two-sided Markov families}


\noindent
In this paper we aim to fully characterize the symmetric families which satisfy the two-sided Markov property. A first step in this direction is given by the proposition \ref{prop:characterization_prod} below. It states the equivalence between the two-sided Markov property and the density being a product of marginals in two distinct ways. These product forms can directly be derived by repeatedly applying the two-sided Markov property. The converse can be derived by combining the two product representations.

\begin{proposition}
\label{prop:characterization_prod}
 For notational convenience we write $\theta_0 :=a$ and $ \theta_{n+1}:=b$. The following statements are equivalent:\\

 \begin{enumerate}
     \item $\{\Lambda^n\}_{n \geq 1}$ is two-sided Markov.\\
     
     \item For all $n \geq 1$, $a,b \in \mathbb{R}^+$ and $\theta_1, ..., \theta_n \in I_{a,b}$ we have
    
    \begin{equation}
    \label{eq:end_markov}
    \begin{cases}
        \Lambda_{a,b}^n(\theta_1, ..., \theta_n|\Theta_n = \theta_n) = \Lambda_{a,\theta_n}^{n-1}(\theta_1, ..., \theta_{n-1})\\
        \Lambda_{a,b}^n(\theta_1, ..., \theta_n|\Theta_1 = \theta_1) = \Lambda_{\theta_1,b}^{n-1}(\theta_2, ..., \theta_{n}).
    \end{cases}
    \end{equation}\\

    \item For all $n \geq 1$, $a,b \in \mathbb{R}^+$ and $\theta_1, ..., \theta_n \in I_{a,b}$,

    \begin{equation}
    \label{eq:prod_forms}
         \Lambda_{a,b}^n(\theta_1, ..., \theta_n) = \prod_{i=0}^{n-1} \Lambda_{\theta_i,b}^{n-i,1}(\theta_{i+1}) = \prod_{i=0}^{n-1} \Lambda_{a,\theta_{n-i+1}}^{n-i,n-i}(\theta_{n-i}).
    \end{equation}
 \end{enumerate}
    
\end{proposition}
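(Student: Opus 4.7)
The plan is to prove the three statements equivalent by establishing the cycle $1 \Rightarrow 2 \Rightarrow 3 \Rightarrow 1$. The implication $1 \Rightarrow 2$ is immediate from \eqref{eq:two-sided_Markov_def} by taking $j=1$ and $j=n$ respectively and using the convention $\Lambda_{a,b}^0 := 1$.

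For $2 \Rightarrow 3$, I would iterate the identities in \eqref{eq:end_markov}. The second identity gives $\Lambda_{a,b}^n(\theta_1,\ldots,\theta_n) = \Lambda_{a,b}^{n,1}(\theta_1)\,\Lambda_{\theta_1,b}^{n-1}(\theta_2,\ldots,\theta_n)$; applying the same identity to $\Lambda_{\theta_1,b}^{n-1}$, then to $\Lambda_{\theta_2,b}^{n-2}$, and so on, peels off one leftmost marginal at each step and produces the first product form in \eqref{eq:prod_forms}. The second product form follows symmetrically by iterating the first identity of \eqref{eq:end_markov} from the right, peeling off one rightmost marginal at a time.

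The main implication is $3 \Rightarrow 1$. Fix $j \in \{1,\ldots,n\}$ and split the first product of \eqref{eq:prod_forms} into indices $i < j$ and $i \geq j$. Recognising the latter block as the first product representation applied to $n-j$ sites with parameters $(\theta_j,b)$ yields
$$\Lambda_{a,b}^n(\theta_1,\ldots,\theta_n) = F(\theta_1,\ldots,\theta_j) \cdot \Lambda_{\theta_j,b}^{n-j}(\theta_{j+1},\ldots,\theta_n),$$
with $F(\theta_1,\ldots,\theta_j) := \prod_{i=0}^{j-1} \Lambda_{\theta_i,b}^{n-i,1}(\theta_{i+1})$. The analogous splitting of the second product, identifying the initial segment as the second product representation for $j-1$ sites with parameters $(a,\theta_j)$, yields
$$\Lambda_{a,b}^n(\theta_1,\ldots,\theta_n) = \Lambda_{a,\theta_j}^{j-1}(\theta_1,\ldots,\theta_{j-1}) \cdot G(\theta_j,\ldots,\theta_n),$$
for a remaining factor $G$. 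Equating the two expressions and rearranging gives
$$\frac{F(\theta_1,\ldots,\theta_j)}{\Lambda_{a,\theta_j}^{j-1}(\theta_1,\ldots,\theta_{j-1})} = \frac{G(\theta_j,\ldots,\theta_n)}{\Lambda_{\theta_j,b}^{n-j}(\theta_{j+1},\ldots,\theta_n)}.$$
Since the left-hand side depends only on $\theta_1,\ldots,\theta_j$ while the right-hand side depends only on $\theta_j,\ldots,\theta_n$, both sides must equal a single function $c(\theta_j)$ of $\theta_j$ alone. Substituting back produces the factorisation $\Lambda_{a,b}^n(\theta_1,\ldots,\theta_n) = c(\theta_j)\,\Lambda_{a,\theta_j}^{j-1}(\theta_1,\ldots,\theta_{j-1})\,\Lambda_{\theta_j,b}^{n-j}(\theta_{j+1},\ldots,\theta_n)$; integrating out every coordinate except $\theta_j$ identifies $c(\theta_j) = \Lambda_{a,b}^{n,j}(\theta_j)$, and dividing through recovers \eqref{eq:two-sided_Markov_def}.

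The main obstacle is precisely this last separation-of-variables step: neither product form of 3 on its own encodes the two-sided Markov property at interior indices, so the argument crucially uses both product forms in tandem, one to extract the right factor and the other to extract the left factor. A minor point to check is that the support condition 1(ii) ensures the denominators above are non-zero on the relevant domain, so that all conditional densities and the divisions used are well defined.
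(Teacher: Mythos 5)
Your proof is correct and follows essentially the same route as the paper: the $1\Rightarrow2$ and $2\Rightarrow3$ steps are direct and inductive respectively, and the key $3\Rightarrow1$ step uses exactly the paper's separation-of-variables argument, splitting both product forms at index $j$ and identifying the common ratio $c(\theta_j)$ as $\Lambda_{a,b}^{n,j}(\theta_j)$ by integration. The only slight difference is in the handling of possible zero factors: the paper explicitly notes that if any factor vanishes then both sides of \eqref{eq:two-sided_Markov_def} are zero and the identity holds trivially, whereas you phrase this as the support restriction ensuring non-vanishing on the relevant domain; both are fine, but the paper's phrasing is a bit cleaner since in the $3\Rightarrow1$ direction one should not invoke condition 1(ii) as a hypothesis.
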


\begin{proof}
    1 implies 2: This follows immediately from the definition of the two-sided Markov property.\\

    \noindent
    2 implies 3: We use induction. \eqref{eq:prod_forms} obviously holds for $n=1$. Assume \eqref{eq:prod_forms} holds for $n=k$, then we have, using \eqref{eq:end_markov},

    \begin{equation}
        \Lambda_{a,b}^{k+1}(\theta_1, ..., \theta_{k+1}) = \Lambda_{a,b}^{k+1,k+1}(\theta_{k+1})\cdot\Lambda_{a,\theta_{k+1}}^{k}(\theta_1, ..., \theta_{k}) = \Lambda_{a,b}^{k+1,k+1}(\theta_{k+1}) \cdot \prod_{i=0}^{k-1} \Lambda_{a,\theta_{k-i+1}}^{k-i,k-i}(\theta_{k-i})
    \end{equation}

    and

    \begin{equation}
        \Lambda_{a,b}^{k+1}(\theta_1, ..., \theta_{k+1}) = \Lambda_{a,b}^{k+1,1}(\theta_{1})\cdot\Lambda_{\theta_1,b}^{k}(\theta_2, ..., \theta_{k+1}) = \Lambda_{a,b}^{k+1,1}(\theta_{1})\cdot \prod_{i=0}^{k} \Lambda_{\theta_i,b}^{k-i,1}(\theta_{i+1}).
    \end{equation}

    \noindent
    3 implies 1: We have the following two expressions for $\Lambda_{a,b}^n(\theta_1,...,\theta_n)$,

    \begin{align}
        \Lambda_{a,b}^n&(\theta_1,...,\theta_n)\nonumber\\ 
        &= [\Lambda_{a,b}^{n,1}(\theta_1) \cdot \Lambda_{\theta_1,b}^{n-1,1}(\theta_2)...\Lambda_{\theta_{j-1},b}^{n-j+1,1}(\theta_j)] \cdot [\Lambda_{\theta_j,b}^{n-j,1}(\theta_{j+1}) \cdot \Lambda_{\theta_{j+1},b}^{n-j-1,1}(\theta_{j+2})...\Lambda_{\theta_{n-1},b}^{1,1}(\theta_{n})]
    \end{align}

    and

    \begin{align}
        \Lambda_{a,b}^n&(\theta_1,...,\theta_n)\nonumber\\ 
        &= [\Lambda_{a,\theta_2}^{1,1}(\theta_1) \cdot \Lambda_{a,\theta_3}^{2,2}(\theta_2)...\Lambda_{a,\theta_j}^{j-1,j-1}(\theta_{j-1})] \cdot [\Lambda_{a,\theta_{j+1}}^{j,j}(\theta_{j}) \cdot \Lambda_{a,\theta_{j+2}}^{j+1,j+1}(\theta_{j+1})...\Lambda_{a,b}^{n,n}(\theta_{n})].
    \end{align}

    \noindent
    Notice both expressions factor into a part which does not depend on $\theta_1,...,\theta_{j-1}$ and a part which does not depend on $\theta_{j+1},...,\theta_{n}$. As a consequence 


    \begin{equation}
        \frac{\Lambda_{a,b}^{n,1}(\theta_1) \cdot \Lambda_{\theta_1,b}^{n-1,1}(\theta_2)...\Lambda_{\theta_{j-1},b}^{n-j+1,1}(\theta_j)}{\Lambda_{a,\theta_2}^{1,1}(\theta_1) \cdot \Lambda_{a,\theta_3}^{2,2}(\theta_2)...\Lambda_{a,\theta_j}^{j-1,j-1}(\theta_{j-1})} = C(a,\theta_j,b) = \frac{\Lambda_{a,\theta_{j+1}}^{j,j}(\theta_{j}) \cdot \Lambda_{a,\theta_{j+2}}^{j+1,j+1}(\theta_{j+1})...\Lambda_{a,b}^{n,n}(\theta_{n})}{\Lambda_{\theta_j,b}^{n-j,1}(\theta_{j+1}) \cdot \Lambda_{\theta_{j+1},b}^{n-j-1,1}(\theta_{j+2})...\Lambda_{\theta_{n-1},b}^{1,1}(\theta_{n})}
    \end{equation}

    \noindent
    for some function $C$. We tacitly used that the denominators above are non-zero.  However if one of the factors in the denominator of the equation above is zero, then  the two-sided Markov property holds automatically because both left and right hand side of the defining property \eqref{eq:two-sided_Markov_def} are equal to zero. One can now relate the factors of the first and the second expression via $C$. We use this to show the two-sided Markov property.

     \begin{align}
     \label{eq:prop1.3_3to1}
        \Lambda_{a,b}^n&(\theta_1,...,\theta_n)\nonumber\\ 
        &= [\Lambda_{a,b}^{n,1}(\theta_1) \cdot \Lambda_{\theta_1,b}^{n-1,1}(\theta_2)...\Lambda_{\theta_{j-1},b}^{n-j+1,1}(\theta_j)] \cdot [\Lambda_{\theta_j,b}^{n-j,1}(\theta_{j+1}) \cdot \Lambda_{\theta_{j+1},b}^{n-j-1,1}(\theta_{j+2})...\Lambda_{\theta_{n-1},b}^{1,1}(\theta_{n})]\\
        &= C(a,\theta_j,b) \cdot [\Lambda_{a,\theta_2}^{1,1}(\theta_1) \cdot \Lambda_{a,\theta_3}^{2,2}(\theta_2)...\Lambda_{a,\theta_{j}}^{j-1,j-1}(\theta_{j-1})] \cdot [\Lambda_{\theta_j,b}^{n-j,1}(\theta_{j+1}) \cdot \Lambda_{\theta_{j+1},b}^{n-j-1,1}(\theta_{j+2})...\Lambda_{\theta_{n-1},b}^{1,1}(\theta_{n})]\nonumber\\
        &=C(a,\theta_j, b) \cdot \Lambda_{a,\theta_j}^{j-1}(\theta_1,...,\theta_{j-1}) \cdot \Lambda_{\theta_j, b}^{n-j}(\theta_{j+1},...,\theta_{n}) \nonumber.
    \end{align}

\noindent
Integrating both sides of this equation with respect to $\theta_1,...,\theta_{j-1},\theta_{j+1},...,\theta_n$ yields $C(a,\theta_j, b) =\Lambda^{n,j}_{a,b}(\theta_j)$, which then gives us the two-sided Markov property in \eqref{eq:prop1.3_3to1}. 
    
\end{proof}

\begin{remark}
\label{rmk:construction}
    Proposition \eqref{prop:characterization_prod} will play an important role in later applications since it shows the whole family is determined by its left and right-side marginals, i.e. $\{ \Lambda^{n,1}\}_{n \geq 1}$ and $\{ \Lambda^{n,n}\}_{n \geq 1}$. More precisely, suppose one has a family up to some $m \geq 0$, $\{\Lambda^n\}_{m \geq n \geq 1}$. Then we can construct a valid density $\Lambda^{m+1}$ if we can find $\Lambda^{m+1,1}$ and $\Lambda^{m+1,m+1}$ such that

    \begin{equation}
        \Lambda^{m+1}_{a,b}(\theta_1,...,\theta_{m+1}) :=\Lambda_{a,b}^{m+1,1}(\theta_1) \cdot \Lambda_{\theta_1,b}^{m}(\theta_2,...,\theta_{m+1}) =   \Lambda_{a,\theta_{m+1}}^{m}(\theta_1,...,\theta_m) \cdot \Lambda_{a,b}^{m+1,m+1}(\theta_{m+1}).
    \end{equation}
    
    \noindent
    for all $a,b \in \mathbb{R}^+$ and $\theta_1,...,\theta_{m+1} \in I_{a,b}$. Indeed, one can substitute the product forms in \eqref{eq:prod_forms} for $\Lambda_{\theta_1,b}^{m}(\theta_2,...,\theta_{m+1})$ and $\Lambda_{a,\theta_{m+1}}^{m}(\theta_1,...,\theta_m)$. Then $\Lambda_{a,b}^{m+1}$ again has the product form

    \begin{equation}
        \Lambda^{m+1}_{a,b}(\theta_1,...,\theta_{m+1}) = \prod_{i=0}^{m} \Lambda_{\theta_i,b}^{m+1-i,1}(\theta_{i+1}) = \prod_{i=0}^{m} \Lambda_{a,\theta_{m+2-i}}^{m+1-i,m+1-i}(\theta_{m+1-i}),
    \end{equation}

    \noindent
    and hence satisfies the two-side Markov property.
    
\end{remark}

\subsection{Implications of the symmetry property}
In this subsection we examine some of the consequences of the symmetry property defined in \eqref{eq:symm_def}. First, we will relate the $j$-th and the $n-j+1$-th marginal. This is done in proposition \ref{prop:relation_marginals}. Next we show that the symmetry of a family $\{\Lambda^n\}_{n\geq 1}$ is completely determined by the symmetry of $\Lambda^1$, provided that the family is two-sided Markov. This result is the content of proposition \ref{prop:propagation_sym}.

\begin{proposition}
\label{prop:relation_marginals}
    Let $\{\Lambda^n\}_{n\geq1}$be symmetric. Assume $a,b \in \mathbb{R}^+$, $\theta \in \mathbb{R}$ and $n \geq 1$, then 

    \begin{equation}
        \Lambda_{a,b}^{n,j}(\theta) = \Lambda_{b,a}^{n,n-j+1}(\theta)
    \end{equation}

    for all $j\in\{1,...,n\}$.
\end{proposition}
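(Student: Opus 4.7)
The statement is a one-variable identity between marginals of symmetric $n$-site densities, so the plan is to write each side as an $(n-1)$-fold Lebesgue integral and match them via the pointwise symmetry relation in Definition 2.1 combined with a relabelling of dummy variables.

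Concretely, I would start from the definition
\[
\Lambda_{a,b}^{n,j}(\theta) = \int_{\mathbb{R}^{n-1}} \Lambda_{a,b}^n(\theta_1, \ldots, \theta_{j-1}, \theta, \theta_{j+1}, \ldots, \theta_n) \prod_{i \neq j} d\theta_i,
\]
and use the symmetry property \eqref{eq:symm_def} to replace the integrand by
\[
\Lambda_{b,a}^n(\theta_n, \ldots, \theta_{j+1}, \theta, \theta_{j-1}, \ldots, \theta_1).
\]
Reading off slot positions, the fixed value $\theta$, which originally sat in position $j$, now occupies position $n-j+1$ of $\Lambda_{b,a}^n$, while for each $i \neq j$ the variable $\theta_i$ sits in position $n-i+1$.

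Next I would perform the change of variables $\phi_k := \theta_{n-k+1}$ for every $k \in \{1,\ldots,n\} \setminus \{n-j+1\}$. This is a permutation of dummy coordinates with unit Jacobian, so the integral rewrites as
\[
\int_{\mathbb{R}^{n-1}} \Lambda_{b,a}^n(\phi_1, \ldots, \phi_{n-j}, \theta, \phi_{n-j+2}, \ldots, \phi_n) \prod_{k \neq n-j+1} d\phi_k,
\]
which matches the definition of $\Lambda_{b,a}^{n,n-j+1}(\theta)$ exactly.

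I do not expect any genuine obstacle: this is pure bookkeeping on indices. The only small point worth noting is that $I_{a,b} = I_{b,a}$, so the effective support of the density (off which the integrand vanishes) is symmetric under the swap $(a,b) \mapsto (b,a)$, and no boundary correction is needed when renaming the integration variables.
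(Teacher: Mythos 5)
Your proof is correct and follows the same route as the paper: write out the marginal as an $(n-1)$-fold integral, apply the symmetry identity \eqref{eq:symm_def} to the integrand, and relabel the dummy variables. The paper does exactly this, only leaving the final relabelling implicit where you spell it out.
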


\begin{proof}
    \begin{align}
        \Lambda_{a,b}^{n,j}(\theta) &= \int_{-\infty}^{\infty}...\int_{-\infty}^{\infty} \Lambda_{a,b}^{n}(\theta_1,...,\theta, ..., \theta_n) d\theta_1...d\theta_{j-1} d\theta_{j+1} ... d\theta_n\\
        &=\int_{-\infty}^{\infty}...\int_{-\infty}^{\infty} \Lambda_{b,a}^{n}(\theta_n,...,\theta, ..., \theta_1) d\theta_1...d\theta_{j-1} d\theta_{j+1} ... d\theta_n =  \Lambda_{b,a}^{n,n-j+1}(\theta)\nonumber
    \end{align}
\end{proof}

\begin{proposition}
\label{prop:propagation_sym}
    Let $\{\Lambda^n_{a,b}\}_{n \geq 1}$ be two-sided Markov with $\Lambda_{a,b}^1 = \Lambda_{b,a}^1$, then the whole family $\{\Lambda_{a,b}^n\}_{n \geq 1}$ is symmetric.
\end{proposition}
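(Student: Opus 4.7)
The plan is to argue by induction on $n$. The base case $n=1$ is the hypothesis $\Lambda^1_{a,b}=\Lambda^1_{b,a}$. For the inductive step, assume $\{\Lambda^k\}_{1\leq k\leq n-1}$ is symmetric; Proposition~\ref{prop:relation_marginals} then lifts this to the marginals, giving $\Lambda^{k,1}_{c,d}=\Lambda^{k,k}_{d,c}$ and more generally $\Lambda^{k,j}_{c,d}=\Lambda^{k,k-j+1}_{d,c}$ for every $k<n$.

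The main tool is the pair of product decompositions in Proposition~\ref{prop:characterization_prod}. I would write $\Lambda^n_{a,b}(\theta_1,\ldots,\theta_n)$ using the right-marginal form and, after the relabelling $\eta_k=\theta_{n+1-k}$, write $\Lambda^n_{b,a}(\theta_n,\ldots,\theta_1)$ using the left-marginal form. Factor by factor at index $i$, these read $\Lambda^{n-i,1}_{\theta_i,b}(\theta_{i+1})$ and $\Lambda^{n-i,n-i}_{b,\theta_i}(\theta_{i+1})$ respectively. For every $i\geq 1$ the orders satisfy $n-i<n$, so the inductive marginal identity makes the two factors equal; the only surviving mismatch is at $i=0$. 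This gives the multiplicative identity
\[
\Lambda^n_{a,b}(\theta_1,\ldots,\theta_n)\,\Lambda^{n,n}_{b,a}(\theta_1)=\Lambda^n_{b,a}(\theta_n,\ldots,\theta_1)\,\Lambda^{n,1}_{a,b}(\theta_1),
\]
so on the common support the ratio $\Lambda^n_{a,b}(\theta_1,\ldots,\theta_n)/\Lambda^n_{b,a}(\theta_n,\ldots,\theta_1)$ depends only on $\theta_1$.

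Next I would repeat the same comparison with the two product forms interchanged, i.e.\ using the left-marginal form for $\Lambda^n_{a,b}$ and the right-marginal form for the reversed $\Lambda^n_{b,a}$. By the same cancellation the ratio now depends only on $\theta_n$. Since $\theta_1$ and $\theta_n$ vary independently inside the ordered support $\{a<\theta_1<\cdots<\theta_n<b\}$, the ratio is forced to be a constant $C(a,b)$, and integrating both sides against Lebesgue measure on $I_{a,b}^n$ (both are probability densities) immediately yields $C(a,b)=1$.

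The main obstacle is exactly the unmatched boundary factor at $i=0$ (equivalently $k=n$), where the inductive hypothesis is unavailable. The point of running the comparison twice, with the two product decompositions in opposite roles, is to trap this single degree of freedom from both endpoints, leaving room only for a global constant which normalization then fixes. A minor technical issue is that taking literal ratios is only valid on the common support; the multiplicative form of the displayed identity avoids this and holds everywhere, and the support assumption on $\Lambda^1$ (together with Proposition~\ref{prop:characterization_prod}) guarantees that both sides are positive on the same ordered subset of $I_{a,b}^n$.
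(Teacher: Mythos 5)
Your proof is correct and follows essentially the same route as the paper's: an induction in which the inductive step reduces the ratio $\Lambda^n_{a,b}(\theta_1,\ldots,\theta_n)/\Lambda^n_{b,a}(\theta_n,\ldots,\theta_1)$ to a quantity separable in $\theta_1$ and $\theta_n$, which must therefore be constant, and integration then forces the constant to be $1$. The only cosmetic difference is that you cancel the middle factors via the explicit product decomposition of Proposition~\ref{prop:characterization_prod}, whereas the paper performs the same cancellation by conditioning on $\Theta_1$ and $\Theta_{k+1}$ in the two-sided Markov form; the underlying identity and the final normalization step are identical.
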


\begin{proof}
    We use induction. The symmetry of $\Lambda_{a,b}^n$ holds for $n=1$ by assumption. Suppose the symmetry holds for $n=k$ and that $(\theta_1,..., \theta_{k+1}) \in \text{supp } \Lambda_{a,b}^{k+1}$. Notice that, due to the two-sided Markov property, 

    \begin{equation}
        1 = \frac{\Lambda_{a,b}^{k+1}(\theta_1,...,\theta_{k+1})}{\Lambda_{a,b}^{k+1}(\theta_1,...,\theta_{k+1})} = \frac{\Lambda_{a,b}^{k+1,1}(\theta_1) \cdot \Lambda_{\theta_1,\theta_k}^{k-1}(\theta_2,...,\theta_{k}) \cdot \Lambda_{\theta_1,b}^{k,k}(\theta_{k+1})}{\Lambda_{a,b}^{k+1,k+1}(\theta_{k+1}) \cdot \Lambda_{\theta_1,\theta_k}^{k-1}(\theta_2,...,\theta_{k}) \cdot \Lambda_{a,\theta_{k+1}}^{k,1}(\theta_1)}.
    \end{equation}

We obtain

    \begin{equation}
    \label{eq:seperability}
        \frac{\Lambda_{a,b}^{k+1,1}(\theta_1)}{\Lambda_{a,b}^{k+1,k+1}(\theta_{k+1})} = \frac{\Lambda_{a,\theta_{k+1}}^{k,1}(\theta_1)}{\Lambda_{\theta_1,b}^{k,k}(\theta_{k+1})}.
    \end{equation}

    \noindent
    We will use this equation to show the symmetry for $n= k+1$. Notice that we have symmetry if 

    \begin{equation} \label{eq:criterium_symm}
        1 = \frac{\Lambda_{a,b}^{k+1}(\theta_1,...,\theta_{k+1})}{\Lambda_{b,a}^{k+1}(\theta_{k+1},...,\theta_1)}= \frac{\Lambda_{a,b}^{k+1,1}(\theta_1) \cdot \Lambda_{\theta_1,b}^{k}(\theta_2,...,\theta_{k+1})}{\Lambda_{b,a}^{k+1,k+1}(\theta_1) \cdot \Lambda_{b,\theta_1}^{k}(\theta_{k+1},...,\theta_{2})}
    \end{equation}

    By the induction hypothesis we have $\Lambda_{\theta_1,b}^{k}(\theta_2,...,\theta_{k+1}) = \Lambda_{b,\theta_1}^{k}(\theta_{k+1},...,\theta_{2})$, hence the right hand side of \eqref{eq:criterium_symm} simplifies. We see that symmetry for $n=k+1$ is equivalent with

    \begin{equation}
    \label{eq:symmetry_condition}
        \frac{\Lambda_{a,b}^{k+1,1}(\theta_1)}{\Lambda_{b,a}^{k+1,k+1}(\theta_1)} = 1.
    \end{equation}

    \noindent
    We will prove \eqref{eq:symmetry_condition}. We start from \eqref{eq:seperability} and observe that

    \begin{equation}
        \frac{\Lambda_{a,b}^{k+1,1}(\theta_1)}{\Lambda_{a,b}^{k+1,k+1}(\theta_{k+1})} = \frac{\Lambda_{a,\theta_{k+1}}^{k,1}(\theta_1)}{\Lambda_{\theta_1,b}^{k,k}(\theta_{k+1})}  =  \frac{ \Lambda_{\theta_{k+1},a}^{k,k}(\theta_1)}{\Lambda_{b,\theta_1}^{k,1}(\theta_{k+1})} = \frac{\Lambda_{b,a}^{k+1,k+1}(\theta_1)}{\Lambda_{b,a}^{k+1,1}(\theta_{k+1})}.
    \end{equation}

    \noindent
    For the second equality we use the induction hypothesis combined with Proposition \ref{prop:relation_marginals} and for the last equality we again use \eqref{eq:seperability}. One can now separate this equation into a $\theta_1$-dependent and a $\theta_{k+1}$-dependent side. That is

    \begin{equation}
        \frac{\Lambda_{a,b}^{k+1,1}(\theta_1)}{\Lambda_{b,a}^{k+1,k+1}(\theta_1)} = \frac{\Lambda_{a,b}^{k+1,k+1}(\theta_{k+1})}{\Lambda_{b,a}^{k+1,1}(\theta_{k+1})} = c
    \end{equation}

    \noindent
    for some $c$ independent from both $\theta_1$ and $\theta_{k+1}$. Integrating $\Lambda_{a,b}^{k+1,1}(\theta_1) = c \cdot \Lambda_{b,a}^{k+1,k+1}(\theta_1)$ over $\theta_1$ yields $c = 1$. This yields \eqref{eq:symmetry_condition} and hence concludes the proof.
\end{proof}

\section{Support of the densities}
In this section we study the support of the densities $\{\Lambda^n_{a,b}\}_{n \geq 1}$. More precisely, we show that the support of the first marginal $\Lambda^{n,1}_{a,b}$ is given by $I_{a,b}$ for all $n\in \mathbb{N}$. The product form from the previous section then implies that $\Lambda^n_{a,b}$ is supported on the set of ordered $n$-tuples in the interval $I_{a,b}$.\\

\noindent
For two-sided Markov families $\{\Lambda^n\}_{n \geq 1}$ one can find transition operators acting on the marginals. More precise, for suitable $i$, one can find a raising operator $P_{n,i+1}$ and a lowering operator $Q_{n,i-1}$ transforming $\Lambda_{a,b}^{n,i}$ to $\Lambda_{a,b}^{n,i+1}$ and $\Lambda_{a,b}^{n,i-1}$ respectively. The proposition below states these transition operators explicitly.

\begin{proposition}
\label{prop:transition_op_marginals}
    Let $\{\Lambda^n\}_{n\geq1}$ be two-sided Markov. For $a,b \in \mathbb{R}^+$ and $n \geq 1$, $i\in \{1,...,n\}$ we have

    \begin{equation}
    \begin{cases}
         \Lambda_{a,b}^{n,i+1} = P_{n,i+1} \Lambda_{a,b}^{n,i} \quad \text{ for } i\in \{1,...,n-1\}\\
         \Lambda_{a,b}^{n,i-1} = Q_{n,i-1}\Lambda_{a,b}^{n,i} \quad \text{ for } i\in \{2,...,n\}
    \end{cases}.
    \end{equation}

    Here $P_{n,i+1}: L^1(-\infty, \infty) \rightarrow L^1(-\infty, \infty)$ and $Q_{n,i-1}: L^1(-\infty, \infty) \rightarrow L^1(-\infty, \infty)$ are given by

    \begin{equation}
    \begin{cases}
        P_{n,i+1} f (x) = \int_{-\infty}^\infty f(y) \cdot \Lambda_{y,b}^{n-i,1}(x) dy\\
        Q_{n,i-1} f (x) = \int_{-\infty}^\infty f(y) \cdot \Lambda_{a,y}^{i-1,i-1}(x) dy 
    \end{cases}.
    \end{equation}

    \begin{proof}
        It is straightforward to show that transition operators $P_{n,i+1}$ and $Q_{n,i-1}$ map $L^1(-\infty, \infty)$ functions to $L^1(-\infty, \infty)$ functions. Indeed, let $f \in L^1(-\infty, \infty)$, then

        \begin{equation}
            \int_{-\infty}^\infty |P_{n,i+1} f(x)| dx \leq \int_{-\infty}^\infty \int_{-\infty}^\infty |f(y)| \cdot |\Lambda_{y,b}^{n-i,1}(x)| dy dx \leq ||f||_1< \infty.\
        \end{equation}

        A similar argument can be used to show $Q_{n,i-1}$ map $L^1(-\infty, \infty)$ functions to $L^1(-\infty, \infty)$.\\

        \noindent
        We now show that $\Lambda_{a,b}^{n,i+1} = P_{n,i+1} \Lambda_{a,b}^{n,i}$. On can use an analogous proof to show $ \Lambda_{a,b}^{n,i-1} = Q_{n,i-1}\Lambda_{a,b}^{n,i}$. \\

        \noindent
        We use the notation $\Lambda_{a,b}^{n,(i,i+1)}$ to denote the joint distribution of $\Theta_i$ and $\Theta_{i+1}$, i.e.

        \begin{equation}
            \Lambda_{a,b}^{n,(i,i+1)}(\theta_{i},\theta_{i+1}) = \int_{-\infty}^{\infty} ... \int_{-\infty}^{\infty}  \Lambda_{a,b}^{n}(\theta_1,...,\theta_n) d\theta_1,...,d\theta_{i-1} d\theta_{i+2}...d\theta_n.
        \end{equation}

        \noindent
        We write $ \Lambda_{a,b}^{n,(i,i+1)}(y,\theta_{i+1}|\Theta_{i} = y)$ for the joint distribution under the condition $\Theta_{i}=y$. Suppose we have 
        \begin{align}
        \label{eq:intermediate_step_trans_op}
             \Lambda_{a,b}^{n,(i,i+1)}(y,\theta_{i+1}|\Theta_{i} = y) = \Lambda_{y,b}^{n-i,1}(\theta_{i+1}),
        \end{align}

        then the result follows directly:

        \begin{align}
            \Lambda_{a,b}^{n,i+1}(\theta_{i+1}) &= \int_{-\infty}^{\infty}\Lambda_{a,b}^{n,i}(y)  \cdot \Lambda_{a,b}^{n,(i,i+1)}(y,\theta_{i+1}|\Theta_{i} = y) dy\\
            &= \int_{-\infty}^{\infty}\Lambda_{a,b}^{n,i}(y)  \cdot \Lambda_{y,b}^{n-i,1}(\theta_{i+1}) dy\nonumber\\
            &= P_{n,i+1}  \Lambda_{a,b}^{n,i}(\theta_{i+1}) \nonumber.
        \end{align}

        To prove \eqref{eq:intermediate_step_trans_op} we compute $\Lambda_{a,b}^{n,(i,i+1)}(y,\theta_{i+1}|\Theta_{i} = y)$,

        \begin{align}
            \Lambda_{a,b}^{n,(i,i+1)}(y,\theta_{i+1}|&\Theta_{i} = y) \nonumber\\ 
            =& \int_{-\infty}^\infty ...\int_{-\infty}^\infty \Lambda_{a,b}^n(\theta_1,..., \theta_{i-1}, y, \theta_{i+1},...,\theta_n |\Theta_{i} = y) d\theta_1...d\theta_{i-1}d\theta_{i+2}...d\theta_n\\
            =& \int_{-\infty}^\infty ...\int_{-\infty}^\infty \Lambda_{a,y}^{i-1}(\theta_1,..., \theta_{i-1}) \cdot \Lambda_{y,b}^{n-i}(\theta_{i+1},..., \theta_n) d\theta_1...d\theta_{i-1}d\theta_{i+2}...d\theta_n\nonumber\\
            =& \int_{-\infty}^\infty ...\int_{-\infty}^\infty \Lambda_{y,b}^{n-i}(\theta_{i+1},..., \theta_n) d\theta_{i+2}...d\theta_n\nonumber\\
            &=\Lambda_{y,b}^{n-i,1}(\theta_{i+1}).\nonumber
        \end{align}
    \end{proof}
\end{proposition}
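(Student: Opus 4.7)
The plan is to establish the raising identity $\Lambda^{n,i+1}_{a,b} = P_{n,i+1} \Lambda^{n,i}_{a,b}$; the lowering identity follows by an entirely parallel argument, or alternatively by combining the raising case with the symmetry relation of Proposition \ref{prop:relation_marginals}. The key idea is to compute the conditional density of $\Theta_{i+1}$ given $\Theta_i = y$ directly from the two-sided Markov property, and then recognize the marginal $\Lambda^{n,i+1}_{a,b}$ as that conditional density integrated against $\Lambda^{n,i}_{a,b}$, i.e.\ as a tower-property identity.

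I would first verify that $P_{n,i+1}$ (and likewise $Q_{n,i-1}$) maps $L^1(\mathbb{R})$ into itself. This is a routine Fubini computation: since $\Lambda^{n-i,1}_{y,b}$ is a probability density for each $y$, one has
\begin{equation*}
\int |P_{n,i+1} f(x)|\, dx \;\leq\; \int |f(y)| \int \Lambda^{n-i,1}_{y,b}(x)\, dx\, dy \;=\; \|f\|_1 .
\end{equation*}
For the main identity, I would apply the two-sided Markov property by conditioning on $\Theta_i = y$ to obtain the factorization $\Lambda^n_{a,b}(\theta_1,\ldots,\theta_n \mid \Theta_i = y) = \Lambda^{i-1}_{a,y}(\theta_1,\ldots,\theta_{i-1}) \cdot \Lambda^{n-i}_{y,b}(\theta_{i+1},\ldots,\theta_n)$. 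Integrating out $\theta_1,\ldots,\theta_{i-1}$ makes the first factor integrate to $1$, while integrating out $\theta_{i+2},\ldots,\theta_n$ collapses the second factor to its first marginal $\Lambda^{n-i,1}_{y,b}(\theta_{i+1})$. Hence the conditional density of $\Theta_{i+1}$ given $\Theta_i = y$ is exactly $\Lambda^{n-i,1}_{y,b}(\cdot)$, and the target identity $\Lambda^{n,i+1}_{a,b}(\theta) = \int \Lambda^{n,i}_{a,b}(y)\,\Lambda^{n-i,1}_{y,b}(\theta)\, dy$ follows by integrating this conditional against the $i$-th marginal.

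There is no serious obstacle here; the only minor subtlety is that the conditional density in the factorization is defined only on the support of $\Lambda^{n,i}_{a,b}$. Outside this support the integrand of $P_{n,i+1}\Lambda^{n,i}_{a,b}$ vanishes through the factor $\Lambda^{n,i}_{a,b}(y)$, so the identity extends trivially and one need not separately handle points where conditioning is ill-defined. The argument for $Q_{n,i-1}$ is completely analogous, with the ``right segment'' $\Lambda^{n-i}_{y,b}$ and its first marginal replaced by the ``left segment'' $\Lambda^{i-1}_{a,y}$ and its last marginal $\Lambda^{i-1,i-1}_{a,y}$.
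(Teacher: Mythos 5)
Your proposal follows the same route as the paper: verify the $L^1$-bound by Fubini, condition on $\Theta_i=y$ via the two-sided Markov property, integrate out the remaining coordinates to identify the conditional density of $\Theta_{i+1}$ as $\Lambda^{n-i,1}_{y,b}$, and then apply the tower property against $\Lambda^{n,i}_{a,b}$. The remark about the integrand vanishing outside $\operatorname{supp}\Lambda^{n,i}_{a,b}$ is a nice small addition, but otherwise this is essentially the paper's proof.
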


\noindent
In the following proposition we make use of the transition operators introduced in \ref{prop:transition_op_marginals} in order to prove that the support of $\Lambda^1_{a,b}$ determines the support of the marginals $\Lambda^{n,1}_{a,b}$ and hence via the product structure the support $\Lambda^n_{a,b}$.


\begin{proposition}
\label{prop:supp_lambdas}
    Let $\{\Lambda^n\}_{n\geq1}$ be two-sided Markov and $a,b \in \mathbb{R}^+$. Then
    
    \begin{equation}
        \text{\normalfont supp }(\Lambda_{a,b}^1) = I_{a,b} \quad \text{implies} \quad \text{\normalfont supp }(\Lambda_{a,b}^{n,1}) = I_{a,b}
    \end{equation}

    for all $n \geq 1$.
\end{proposition}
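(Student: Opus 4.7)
The plan is to prove a slightly stronger statement by induction on $n$: both $\text{supp}(\Lambda^{n,1}_{a,b}) = I_{a,b}$ and $\text{supp}(\Lambda^{n,n}_{a,b}) = I_{a,b}$ hold for all $a,b \in \mathbb{R}^+$. The base case $n=1$ is immediate since $\Lambda^{1,1}_{a,b}=\Lambda^1_{a,b}$ has support $I_{a,b}$ by hypothesis.

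For the inductive step I would apply the two-sided Markov property at both endpoints to express the joint density of $(\Theta_1,\Theta_{n+1})$ in two ways. Conditioning on $\Theta_{n+1}$ and marginalising over $\theta_2,\ldots,\theta_n$ yields
\[
\Lambda^{n+1,(1,n+1)}_{a,b}(\theta_1,\theta_{n+1}) = \Lambda^{n+1,n+1}_{a,b}(\theta_{n+1})\,\Lambda^{n,1}_{a,\theta_{n+1}}(\theta_1),
\]
while conditioning on $\Theta_1$ analogously gives this same joint density as $\Lambda^{n+1,1}_{a,b}(\theta_1)\,\Lambda^{n,n}_{\theta_1,b}(\theta_{n+1})$. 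Integrating each identity once more produces the coupled pair
\[
\Lambda^{n+1,1}_{a,b}(\theta_1) = \int \Lambda^{n+1,n+1}_{a,b}(y)\,\Lambda^{n,1}_{a,y}(\theta_1)\,dy, \qquad \Lambda^{n+1,n+1}_{a,b}(\theta_{n+1}) = \int \Lambda^{n+1,1}_{a,b}(x)\,\Lambda^{n,n}_{x,b}(\theta_{n+1})\,dx.
\]

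Assume $a<b$ (the case $a>b$ is symmetric). Write $S_1 := \text{supp}(\Lambda^{n+1,1}_{a,b})$ and $S_{n+1} := \text{supp}(\Lambda^{n+1,n+1}_{a,b})$; both lie in $(a,b)$ since $\Lambda^{n+1}_{a,b}$ is a probability density on $(I_{a,b})^{n+1}$. The inductive hypothesis gives $\text{supp}(\Lambda^{n,1}_{a,y}) = (a,y)$ for $y\in(a,b)$ and $\text{supp}(\Lambda^{n,n}_{x,b}) = (x,b)$ for $x\in(a,b)$. Hence the integrands in the two identities are positive precisely when, respectively, $y\in S_{n+1}$ with $y>\theta_1$ and $x\in S_1$ with $x<\theta_{n+1}$. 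Reading off these positivity conditions gives $S_1 = (a,\sup S_{n+1})$ and $S_{n+1} = (\inf S_1, b)$. Since $\inf S_1 \ge a$ and $\sup S_{n+1} \le b$, the only consistent solution of this coupled pair of equations is $\inf S_1 = a$ and $\sup S_{n+1} = b$, so $S_1 = S_{n+1} = (a,b) = I_{a,b}$, closing the induction.

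The main technical subtlety I anticipate is ensuring that the identities $S_1 = (a,\sup S_{n+1})$ and $S_{n+1} = (\inf S_1, b)$ hold as genuine set equalities and not only modulo Lebesgue null sets: one must verify that the slices $S_{n+1}\cap(\theta_1,b)$ and $S_1\cap(a,\theta_{n+1})$ carry positive Lebesgue measure whenever they are nonempty. This follows because $\Lambda^{n+1,1}_{a,b}$ and $\Lambda^{n+1,n+1}_{a,b}$ are probability densities on an interval (so their supports cannot collapse onto a null set near their endpoints), combined with the inductive hypothesis that the single-site factors $\Lambda^{n,1}_{a,y}$ and $\Lambda^{n,n}_{x,b}$ appearing in the integrands are strictly positive on the full open interval on which they are supported.
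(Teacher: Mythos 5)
Your proof is correct, and it takes a genuinely different route from the paper's. The paper proves only $\operatorname{supp}(\Lambda_{a,b}^{n,1})=I_{a,b}$ by induction and closes the inductive step by exploiting the raising/lowering transition operators of Proposition~\ref{prop:transition_op_marginals}: it shows $\Lambda_{a,b}^{k+1,1}$ is a fixed point of $Q_{k+1,1}P_{k+1,2}$, so that $g:=P_{k+1,2}\Lambda_{a,b}^{k+1,1}=\Lambda_{a,b}^{k+1,2}$ has support $(\operatorname{ess\,inf}\operatorname{supp}f,\,b)$ and $f=Q_{k+1,1}g$ has support $(a,\,\operatorname{ess\,sup}\operatorname{supp}g)$, forcing both endpoints to be reached. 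You instead strengthen the inductive hypothesis to cover both endpoint marginals $\Lambda^{n,1}$ and $\Lambda^{n,n}$, and use the two-sided Markov property at $\Theta_1$ and $\Theta_{n+1}$ to write the joint $(1,n{+}1)$-marginal in two ways, yielding the coupled integral identities displayed in your proposal. This couples $S_1$ and $S_{n+1}$ rather than $\Lambda^{n,1}$ to itself via a nearest-neighbour round trip, and the conclusion $S_1=S_{n+1}=(a,b)$ falls out directly. Your route is arguably cleaner (it never invokes the raising/lowering operators) at the cost of carrying the slightly stronger two-sided inductive hypothesis; the paper's stays on a single marginal but relies on the additional machinery of Proposition~\ref{prop:transition_op_marginals}. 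One small remark: the paper spends its Step 1 proving the inclusion $\operatorname{supp}(\Lambda_{a,b}^{k+1,1})\subset I_{a,b}$ from the two product expansions, whereas you take it as granted from the definitional assumption that the random vectors live in $(I_{a,b})^{n+1}$; that is fine, but it is worth making explicit. The measure-theoretic caveat you flag at the end is exactly the one the paper handles by introducing the essential infimum and supremum of the support, and your sketch of why it poses no obstruction is correct.
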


\begin{proof}
    We use induction. For $n=1$ the statement holds by assumption. Assume $\forall m \leq k$: supp $\Lambda_{a,b}^{m,1} = I_{a,b}$. First we show that supp $\Lambda_{a,b}^{k+1,1} \subset I_{a,b}$. This part of the argument doesn't rely on the transition operators, those will be used in step 2.\\

    \noindent
    \textbf{Step 1: the support of $\Lambda_{a,b}^{k+1,1}$ is a subset of $I_{a,b}$}\\
    
    \noindent
    Suppose $\theta_1 \in \mathbb{R}$ is in the support of $\Lambda_{a,b}^{k+1,1}$, i.e. $\Lambda_{a,b}^{k+1,1}(\theta_1) \neq 0$. Then,there exist $\theta_2,...,\theta_{k+1}$ such that

    \begin{align}
    \label{eq:two_product_expansions}
        \Lambda_{a,b}^{k+1}(\theta_1,\theta_2,...,\theta_{k+1}) &= \Lambda_{a,b}^{k+1,1}(\theta_1) \cdot \Lambda_{\theta_1,b}^{k,1}(\theta_2) ... \Lambda_{\theta_k,b}^{1,1}(\theta_{k+1})\\
        &= \Lambda_{a,b}^{k+1,k+1}(\theta_{k+1}) \cdot \Lambda_{a,\theta_{k+1}}^{k,k}(\theta_k) ... \Lambda_{a,\theta_2}^{1,1}(\theta_1) \neq 0 \nonumber.
    \end{align}
    
    \noindent
    Suppose $\theta_1 < b$, then $\theta_1 < \theta_2 < ... < \theta_{k+1} \leq b$ by the induction hypothesis and the first line of \eqref{eq:two_product_expansions}. The second line of \eqref{eq:two_product_expansions} tells us that either $a < \theta_{1} < \theta_2 < ... < \theta_{k+1}$ or  $a > \theta_{1} > \theta_2 > ... > \theta_{k+1}$. Combining the information from line one and line two gives $a < \theta_1 < b$. Similarly we can assume $\theta_1 < b$ and derive $b < \theta_1 < a$. We conclude $\theta_1 \in I_{a,b}$.\\

    \noindent
    \textbf{Step 2: $I_{a,b}$ is the support of $\Lambda_{a,b}^{k+1,1}$}\\
    We notice that $\Lambda_{a,b}^{k+1,1}$ is a fixed point for $Q_{k+1,1} P_{k+1,2}$ by virtue of proposition \ref{prop:transition_op_marginals},

    \begin{equation}
        Q_{k+1,1} \Big(P_{k+1,2} \Lambda_{a,b}^{k+1,1}\Big) = Q_{k+1,1} \Lambda_{a,b}^{k+1,2} = \Lambda_{a,b}^{k+1,1}.
    \end{equation}

    \noindent
    We simplify notation by writing $f$ for $\Lambda_{a,b}^{k+1,1}$, $g$ for $P_{k+1,2} \Lambda_{a,b}^{k+1,1}$, i.e. $f:= \Lambda_{a,b}^{k+1,1}$ and $g := P_{k+1,2} \Lambda_{a,b}^{k+1,1} = \Lambda_{a,b}^{k+1,2}$.\\
    

    \noindent
    Assume, without loss of generality, that $a \leq b$. Then,

    \begin{equation}
        g(x) = \int_{-\infty}^{\infty} f(y) \cdot \Lambda_{y,b}^{k,1} (x) dy = \int_a^b f(y) \cdot \Lambda_{y,b}^{k,1} (x) dy.
    \end{equation}

    \noindent
    Here we use that $\text{supp }f= [a,b]$. Note that under the induction hypothesis supp $\Lambda_{y,b}^{k,1} = [y,b]$ for $y \in [a,b]$ and that supp $f \subset I_{a,b} = [a,b]$. We see that 

    \begin{equation}
    \label{eq:supp_g}
        \text{supp } g = \{x \in \mathbb{R}: \text{ess inf} (\text{supp } f) < x < b \},
    \end{equation}

    \noindent
    where $\text{ess inf} (\text{supp } f) := \inf \big \{ z \in \mathbb{R}: \lambda\big( (-\infty,z] \cap \text{supp} f \big) \neq 0 \big\}$ with $\lambda$ the Lebesgue measure. Analogously, we have

    \begin{equation}
        f(x) = Q_{k+1,1} g (x) = \int_{-\infty}^{\infty} g(y) \cdot \Lambda_{a,y}^{1} (x) dy = \int_a^b g(y) \cdot \Lambda_{a,y}^{1} (x) dy
    \end{equation}

    and

    \begin{equation}
        \text{supp } (f) = \{ x \in \mathbb{R}: a < x < \text{ess sup} (\text{supp } g) \}
    \end{equation}

    \noindent
    with $\text{ess sup} (\text{supp } g):= \sup \big \{ z \in \mathbb{R}:\lambda\big( [z,\infty) \cap \text{supp } g \big) \neq 0 \big\}$. Notice that $\text{supp } g$ can not be empty since $f$ is a fixed point of $Q_{k+1,1} P_{k+1,2}$. Indeed, if $\text{supp } g$ would be empty then $f$ would be zero, which is not possible since $f = \Lambda_{a,b}^{k+1,1}$ is a density function. This implies ess sup(supp $g$) $= b$ since the support of $g$ is the interval in \eqref{eq:supp_g}. We conclude supp $f$ = supp $\Lambda_{a,b}^{k+1,1} = (a,b) = I_{a,b}$.
    
\end{proof}

\begin{remark}
    The support of the first marginals determines directly the support of the joint density. Indeed, the product expressions for $\Lambda_{a,b}^{n}$ in proposition \ref{prop:characterization_prod} directly yields that 

    \begin{equation}
        \text{\normalfont supp } \Lambda_{a,b}^{n} = 
        \begin{cases}
            \{\theta_1,...,\theta_{n} \in \mathbb{R}: a < \theta_1 < ... < \theta_n < b\} \quad \text{for } a<b\\
            \{\theta_1,...,\theta_{n} \in \mathbb{R}: a > \theta_1 > ... > \theta_n > b\}\quad \text{for } b < a
        \end{cases}.
    \end{equation}
\end{remark}

\section{Uniqueness and Recursive construction}
In this subsection we attempt to construct families $\{\Lambda^n\}_{n \geq 1}$ satisfying the two-sided Markov property. In particular, we are interested in identifying which ``one-site systems'' $\Lambda^1$ extend to a full two-sided Markov family $\{\Lambda^n\}_{n \geq 1}$. We call those \emph{extendable}. So far, in section 3,  we have seen that both symmetry and the support of the first marginal
$\Lambda^{n,1}$ are determined by $\Lambda^1$. One of the results in this section, Theorem \ref{thm:uniqueness}, gives uniqueness for extensions of $\Lambda^1$ which have support on $I_{a,b}$ for $a,b \in \mathbb{R}$. Moreover, we will use proposition \ref{prop:characterization_prod} to identify the form of the $\Lambda^1$ which are extendable.

\subsection{Uniqueness}
In the previous subsection we found transition operators with the marginals as fixed points. Since these operators are defined in terms of marginals of smaller order, one could be tempted to show uniqueness recursively via a fixed point theorem (e.g the Krein-Rutman theorem). However, in our setting it appears to be non-trivial to satisfy the condition for such theorems. Therefore we give a more direct approach below, relying strongly on our knowledge about the support of the marginals.

\begin{theorem} \label{thm:uniqueness}
    Let $\{\Lambda^n\}_{n \geq 1}$ be a two-sided Markov family. Then $\Lambda^1$ uniquely determines $\{\Lambda^n\}_{n \geq 1}$.
\end{theorem}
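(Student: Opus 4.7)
The plan is to induct on $n$, with the base case $n=1$ being trivial. For the inductive step, assume $\Lambda^m$ is determined by $\Lambda^1$ for all $m \leq k$, and let $\tilde\Lambda^{k+1}$ and $\hat\Lambda^{k+1}$ be two valid extensions of this family, both of which are therefore two-sided Markov together with the previously determined $\Lambda^1,\ldots,\Lambda^k$. The goal is to show $\tilde\Lambda^{k+1}_{a,b}=\hat\Lambda^{k+1}_{a,b}$ for arbitrary $a,b\in\mathbb{R}^+$.

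The crucial observation is that by Proposition \ref{prop:characterization_prod} (the equivalence of two-sided Markov with the two product representations), each extension satisfies both
\[
\Lambda^{k+1}_{a,b}(\theta_1,\ldots,\theta_{k+1}) \;=\; \Lambda^{k+1,1}_{a,b}(\theta_1)\,\Lambda^{k}_{\theta_1,b}(\theta_2,\ldots,\theta_{k+1})
\]
and
\[
\Lambda^{k+1}_{a,b}(\theta_1,\ldots,\theta_{k+1}) \;=\; \Lambda^{k}_{a,\theta_{k+1}}(\theta_1,\ldots,\theta_{k})\,\Lambda^{k+1,k+1}_{a,b}(\theta_{k+1}),
\]
where by the inductive hypothesis the factors $\Lambda^{k}_{\theta_1,b}$ and $\Lambda^{k}_{a,\theta_{k+1}}$ are identical for the two extensions. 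Moreover, by Proposition \ref{prop:supp_lambdas} and the subsequent remark, both extensions share the common support $\{a<\theta_1<\cdots<\theta_{k+1}<b\}$ (or its reverse for $b<a$), so on this set the factors $\Lambda^k_{\theta_1,b}$ and $\Lambda^k_{a,\theta_{k+1}}$ are strictly positive and ratios are well-defined.

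Taking the quotient $\tilde\Lambda^{k+1}_{a,b}/\hat\Lambda^{k+1}_{a,b}$ on the common support and applying the first product representation to numerator and denominator gives
\[
\frac{\tilde\Lambda^{k+1}_{a,b}(\theta_1,\ldots,\theta_{k+1})}{\hat\Lambda^{k+1}_{a,b}(\theta_1,\ldots,\theta_{k+1})} \;=\; \frac{\tilde\Lambda^{k+1,1}_{a,b}(\theta_1)}{\hat\Lambda^{k+1,1}_{a,b}(\theta_1)},
\]
while the second representation yields the same quotient equal to $\tilde\Lambda^{k+1,k+1}_{a,b}(\theta_{k+1})/\hat\Lambda^{k+1,k+1}_{a,b}(\theta_{k+1})$. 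Hence the common ratio depends on neither $\theta_1$ nor $\theta_{k+1}$ and must equal some constant $c(a,b)$.

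Finally, integrating the identity $\tilde\Lambda^{k+1,1}_{a,b}(\theta_1)=c(a,b)\,\hat\Lambda^{k+1,1}_{a,b}(\theta_1)$ over $\theta_1\in I_{a,b}$ and using that both first marginals are probability densities forces $c(a,b)=1$, so the two extensions coincide on the common support (and both vanish off it). The only subtle point, and the one to handle carefully, is to verify that the supports really do match between the two candidate extensions so that the ratio arguments are legitimate; this is what Proposition \ref{prop:supp_lambdas} secures, since the support of $\Lambda^{n,1}_{a,b}$ is $I_{a,b}$ regardless of the extension chosen, and the product form then propagates this to the full support.
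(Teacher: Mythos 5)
Your proof is correct and follows essentially the same route as the paper: both start from the two conditional decompositions (on $\Theta_1$ and on $\Theta_{k+1}$) from Proposition~\ref{prop:characterization_prod}, cancel the shared inner factor $\Lambda^k_{\theta_1,\theta_{k+1}}$ (or $\Lambda^{k-1}_{\theta_1,\theta_{k+1}}$) using the inductive hypothesis, deduce that the ratio of first (equivalently last) marginals is a constant $c(a,b)$, and finally normalize to get $c=1$. The only cosmetic difference is that the paper fixes $\theta_{k+1}=b-\epsilon$ and lets $\epsilon\to 0$ to argue constancy of the ratio, whereas you argue it directly via the separation of variables $\theta_1$ versus $\theta_{k+1}$; your version is if anything a bit cleaner, and you correctly flag the support-matching issue as the point that requires Proposition~\ref{prop:supp_lambdas}.
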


\begin{proof}
Suppose we have two families $\{\Lambda^n\}_{n \geq 1}$ and $\{\widetilde{\Lambda}^n\}_{n \geq 1}$ which are both two-sided Markov. Moreover assume 

    \begin{equation}
        \Lambda^1_{a,b} = \widetilde{\Lambda}^1_{a,b}.
    \end{equation}

    \noindent
    We show

     \begin{equation}
         \{\Lambda^n\}_{n \geq 1} = \{\widetilde{\Lambda}^n\}_{n \geq 1}.
     \end{equation}

    \noindent
    using induction. For $n = 1$, the density functions are equal by assumption. Let $a,b \in \mathbb{R}^+, a < b$, be arbitrary and assume that $\Lambda^k = \widetilde{\Lambda}^k$. For any $a < \theta_1 < ... < \theta_{k+1} < b$ we have

    \begin{align}
        0 \neq \Lambda^{k+1}_{a,b}(\theta_1, ...,\theta_{k+1}) &= \Lambda^{k+1,1}_{a,b}(\theta_1) \cdot \Lambda_{\theta_1, \theta_k}^{k-1}(\theta_2,...,\theta_k) \cdot \Lambda_{\theta_1,b}^{k,k}(\theta_{k+1})\\
        &=\Lambda^{k+1,k+1}_{a,b}(\theta_{k+1}) \cdot \Lambda_{\theta_1, \theta_k}^{k-1}(\theta_2,...,\theta_k) \cdot \Lambda_{a,\theta_{k+1}}^{k,1}(\theta_{1})\nonumber
    \end{align}

    and

    \begin{align}
        0 \neq \widetilde{\Lambda}^{k+1}_{a,b}(\theta_1, ...,\theta_{k+1}) &= \widetilde{\Lambda}^{k+1,1}_{a,b}(\theta_1) \cdot \Lambda_{\theta_1, \theta_k}^{k-1}(\theta_2,...,\theta_k) \cdot \Lambda_{\theta_1,b}^{k,k}(\theta_{k+1})\\
        &=\widetilde{\Lambda}^{k+1,k+1}_{a,b}(\theta_{k+1}) \cdot \Lambda_{\theta_1, \theta_k}^{k-1}(\theta_2,...,\theta_k) \cdot \Lambda_{a,\theta_{k+1}}^{k,1}(\theta_{1})\nonumber.
    \end{align}

    We find

    \begin{equation}
    \label{eq:devision_marginals}
        \frac{\Lambda^{k+1,1}_{a,b}(\theta_1)}{\Lambda^{k+1,k+1}_{a,b}(\theta_{k+1})} = \frac{\Lambda_{a,\theta_{k+1}}^{k,1}(\theta_{1})}{\Lambda_{\theta_1,b}^{k,k}(\theta_{k+1})} =  \frac{\widetilde{\Lambda}^{k+1,1}_{a,b}(\theta_1)}{\widetilde{\Lambda}^{k+1,k+1}_{a,b}(\theta_{k+1})}.
    \end{equation}

    Let $\epsilon >0$ be arbitrary and pick $\theta_{k+1} = b-\epsilon$. For all $\theta_1 \in (a,b-\epsilon)$  

    \begin{equation}
    \label{eq:comparison}
        \frac{\Lambda^{k+1,1}_{a,b}(\theta_1)}{\Lambda^{k+1,k+1}_{a,b}(b-\epsilon)} =  \frac{\widetilde{\Lambda}^{k+1,1}_{a,b}(\theta_1)}{\widetilde{\Lambda}^{k+1,k+1}_{a,b}(b-\epsilon)}.
    \end{equation}

    \noindent
    We see that $\Lambda^{k+1,1}_{a,b}$ and $\tilde{\Lambda}^{k+1,1}_{a,b}$ are the same up to a constant factor on the interval $(a,b- \epsilon)$. Since this holds for arbitrary $\epsilon$ and $\Lambda^{k+1,1}_{a,b}$, $\tilde{\Lambda}^{k+1,1}_{a,b}$ are normalized, we can conclude that $\Lambda^{k+1,1}_{a,b} = \widetilde{\Lambda}^{k+1,1}_{a,b}$ on all of $(a,b)$. Moreover, we have $\Lambda_{a,b}^{k+1} = \widetilde{\Lambda}_{a,b}^{k+1}$. For $b \leq a$ a similar argument holds.
\end{proof}


\begin{remark}
    The theorem above essentially says that it is impossible for different two-sided Markov families to have the same single site density. However, which $\Lambda^1$ correspond to a two-sided Markov family, i.e., which $\Lambda^1$ are extendable  is at this point an open question. This will be the subject of the next subsection, in particular in Theorem \ref{thm:characterization_two-sided_symmetric} below we give a full characterization. 
\end{remark}

\subsection{Recursive construction}
By virtue of proposition \ref{prop:characterization_prod} we can attempt to recursively construct a family $\{\Lambda^n\}_{n\geq 1}$ from a suitable, symmetric, $\Lambda^1$. In the proof of the theorem below we find a construction as described in remark \ref{rmk:construction}, i.e. we find suitable left and right marginals to construct $\Lambda^{n+1}$ from $\Lambda^n$. before we state this theorem, we first prove two preliminary results.

\begin{lemma}
\label{lemma:separability}
    Let $\{\Lambda^n\}_{n \geq 1}$ be a symmetric two-sided Markov family. For arbitrary $n\geq 1$ and $y\in \mathbb{R}^+$, there exist $y$-dependent functions $f_n,g_n$ and $h_n$ on $(\mathbb{R}^+)^2$ such that 

    \begin{equation}
        \Lambda_{a,b}^{n,1}(x) = f_n(a,b) \cdot g_n(b,x) \cdot h_n(x,a) \cdot \mathds{1}(x \in I_{a,b})
    \end{equation}

    for all $a,x,b \in \mathbb{R}^+$ such that $x \in I_{a,b}$ and $b \in I_{a,y}$. If a density is of this form we will refer to it as factorizable.
\end{lemma}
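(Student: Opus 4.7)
The plan is to derive the factorization by embedding the $(a,b)$ system inside an auxiliary $(n+1)$-site system with parameters $(a,y)$ and comparing two Markov factorizations of the joint density of its endpoints $\Theta_1$ and $\Theta_{n+1}$.

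First, I apply the two-sided Markov property (Definition \ref{def:sym_two_sided}) to $\Lambda^{n+1}_{a,y}$ twice: once conditioning on $\Theta_1 = \theta_1$ and once on $\Theta_{n+1} = \theta_{n+1}$. Integrating each resulting expression over $\theta_2,\ldots,\theta_n$ produces two formulas for the joint density of the endpoints, which equated yield
\begin{equation*}
\Lambda^{n+1,1}_{a,y}(\theta_1) \cdot \Lambda^{n,n}_{\theta_1,y}(\theta_{n+1}) = \Lambda^{n+1,n+1}_{a,y}(\theta_{n+1}) \cdot \Lambda^{n,1}_{a,\theta_{n+1}}(\theta_1).
\end{equation*}
Setting $\theta_1 = x$ and $\theta_{n+1} = b$ and solving for $\Lambda^{n,1}_{a,b}(x)$ then gives
\begin{equation*}
\Lambda^{n,1}_{a,b}(x) = \frac{\Lambda^{n+1,1}_{a,y}(x)\, \Lambda^{n,n}_{x,y}(b)}{\Lambda^{n+1,n+1}_{a,y}(b)}.
\end{equation*}
With $y$ held fixed, the three factors on the right depend only on $(x,a)$, on $(b,x)$, and on $(a,b)$, respectively, so defining $h_n(x,a) := \Lambda^{n+1,1}_{a,y}(x)$, $g_n(b,x) := \Lambda^{n,n}_{x,y}(b)$ and $f_n(a,b) := 1/\Lambda^{n+1,n+1}_{a,y}(b)$ produces the factorized form $\Lambda^{n,1}_{a,b}(x) = f_n(a,b)\, g_n(b,x)\, h_n(x,a)$.

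To complete the argument, I have to verify that under the hypotheses $x \in I_{a,b}$ and $b \in I_{a,y}$ all three densities above are strictly positive, so that both the identity and the division are meaningful. Consider the case $a<y$ (the case $y<a$ is analogous). The condition $b \in I_{a,y}$ forces $a<b<y$, and combined with $x \in I_{a,b}$ this yields the chain $a<x<b<y$. Proposition \ref{prop:supp_lambdas} identifies the support of each first marginal $\Lambda^{m,1}_{p,q}$ with $I_{p,q}$, and the symmetry relation $\Lambda^{n,n}_{p,q} = \Lambda^{n,1}_{q,p}$ from Proposition \ref{prop:relation_marginals} extends this to the last marginals. Specializing with $(p,q) = (a,y)$, $(x,y)$ and $(a,y)$ shows that all three factors are nonzero on $\{a<x<b<y\}$.

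The main obstacle is simply to notice the correct auxiliary construction: one has to view the $n$-site $(a,b)$-system as obtained from the $(n+1)$-site $(a,y)$-system by conditioning on the rightmost coordinate $\Theta_{n+1}=b$, which is only possible when $b$ lies in the support of the rightmost marginal, i.e.\ precisely when $b \in I_{a,y}$. Once this embedding is in place the factorization drops out by rearrangement, and the remainder is routine bookkeeping on supports.
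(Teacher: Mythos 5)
Your proof is correct and follows essentially the same route as the paper's: embed the $n$-site $(a,b)$-system in an auxiliary $(n+1)$-site $(a,y)$-system, compare the two Markov factorizations of $\Lambda^{n+1}_{a,y}$ obtained by conditioning on $\Theta_1$ and on $\Theta_{n+1}$, and read off the three factors. The only cosmetic difference is that you eliminate the interior variables by integrating to the two-dimensional endpoint marginal rather than cancelling the common factor $\Lambda^{n-1}_{\theta_1,\theta_{n+1}}$, and you leave some factors as last marginals where the paper applies Proposition \ref{prop:relation_marginals} to turn everything into first marginals.
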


\begin{proof}
By conditioning on $\theta_1$ and $\theta_{n+1}$ we find

\begin{align}
\label{eq:ping}
    \Lambda_{a,y}^{n+1,1}(\theta_1) \cdot \Lambda_{\theta_1,\theta_{n+1}}^{n-1}(\theta_2,...,\theta_n) \cdot  \Lambda_{\theta_1,y}^{n,n}(\theta_{n+1}) = \Lambda_{a,\theta_{n+1}}^{n,1}(\theta_1) \cdot \Lambda_{\theta_1,\theta_{n+1}}^{n-1}(\theta_2,...,\theta_{n}) \cdot  \Lambda_{a,y}^{n+1,n+1}(\theta_{n+1}), 
\end{align}

\noindent
where both sides of the equation are equal to $\Lambda^{n+1}_{a,y}(\theta_1,...,\theta_{n+1})$.
\noindent
To proceed we need information on the support of $\Lambda_{a,y}^{n+1}$. To this end we assume $a < y$. As a consequence, we can pick $a < \theta_1 < ... < \theta_{n+1} < y$, then 

\begin{equation}
\label{eq:separability_symm}
    \frac{\Lambda_{a,y}^{n+1,1}(\theta_1)}{\Lambda_{y,a}^{n+1,1}(\theta_{n+1})} = \frac{\Lambda_{a,y}^{n+1,1}(\theta_1)}{\Lambda_{a,y}^{n+1,n+1}(\theta_{n+1})} = \frac{\Lambda_{a,\theta_{n+1}}^{n,1}(\theta_1)}{\Lambda_{\theta_1,y}^{n,n}(\theta_{n+1})} = \frac{\Lambda_{a,\theta_{n+1}}^{n,1}(\theta_1)}{\Lambda_{y,\theta_1}^{n,1}(\theta_{n+1})}.
\end{equation}

\noindent
The first and third equality use the symmetry property. The second equality follows from \eqref{eq:ping}. The same equation can be derived for $y<a$ and $\theta_1 > \theta_{n+1}$, hence we continue with general $a,y \in \mathbb{R}^+$ and $\theta_1 \in I_{a,\theta_{n+1}}$, $\theta_{n+1} \in I_{a,y}$. Notice that in the case $a<y$ we have $a<x<b<y$, whereas in the case $y<a$ we have $y<b<x<a$. That is, in the first case $y$ acts as an upper bound for the interval $I_{a,b}$ and in the second case it acts like a lower bound. We observe that 

\begin{equation}
    \Lambda_{a,\theta_{n+1}}^{n,1}(\theta_1) =  [\Lambda_{y,a}^{n+1,1}(\theta_{n+1})]^{-1} \cdot \Lambda^{n,1}_{y,\theta_1}(\theta_{n+1}) \cdot \Lambda_{a,y}^{n+1,1}(\theta_1).
\end{equation}

\noindent
One can now take $\theta_1 = x$ and $\theta_{n+1} = b$ to obtain

\begin{equation}
    \Lambda_{a,b}^{n,1}(x) =  [\Lambda_{y,a}^{n+1,1}(b)]^{-1} \cdot \Lambda^{n,1}_{y,x}(b) \cdot \Lambda_{a,y}^{n+1,1}(x).
\end{equation}

\noindent
We can then define the functions $f_n, g_n$ and $h_n$ as follows

\begin{equation}
    f_n(a,b) := [\Lambda_{y,a}^{n+1,1}(b)]^{-1}, \quad g_n(b,x) := \Lambda^{n,1}_{y,x}(b), \quad h_n(x,a) := \Lambda_{a,y}^{n+1,1}(x).
\end{equation}

\end{proof}

\begin{lemma}
\label{lemma:symmetric_f_g}
    Let $\{\Lambda^n\}_{n \geq 1}$ be a symmetric two-sided Markov family. For arbitrary $z_1,z_2 \in \mathbb{R}^+$, $z_1 \leq z_2$, there exist functions $f,g$ and $h$ on $\mathbb{R}$ such that 

    \begin{equation}
        \Lambda_{a,b}^1(x) = f(a,b) \cdot g(b,x) \cdot h(x,a) \cdot \mathds{1}(x \in I_{a,b})
    \end{equation}

    for all $a,b \in (z_1,z_2)$. Moreover, the functions $f$ and $g$ can be chosen to  be symmetric.
\end{lemma}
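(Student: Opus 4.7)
The plan is to leverage Lemma 4.4 with two complementary choices of its auxiliary parameter. Let $y_+ > z_2$ and $y_- < z_1$. For $a, b \in (z_1, z_2)$ with $a < b$, the condition $b \in I_{a, y_+}$ is satisfied, so Lemma 4.4 applied with $n=1$ (recalling $\Lambda^{1,1}_{a,b} = \Lambda^1_{a,b}$) yields
\[
\Lambda^1_{a, b}(x) = \frac{\Lambda^1_{y_+, x}(b)\, \Lambda^{2, 1}_{a, y_+}(x)}{\Lambda^{2, 1}_{y_+, a}(b)}\,\mathds{1}(a < x < b).
\]
A second factorization of the same density comes from the symmetry hypothesis: since $\Lambda^1_{a, b} = \Lambda^1_{b, a}$, I apply Lemma 4.4 with parameter $y_-$ to the reversed pair (for which the needed condition $a \in I_{b, y_-}$ is $y_- < a < b$), obtaining
\[
\Lambda^1_{a, b}(x) = \Lambda^1_{b, a}(x) = \frac{\Lambda^1_{y_-, x}(a)\, \Lambda^{2, 1}_{b, y_-}(x)}{\Lambda^{2, 1}_{y_-, b}(a)}.
\]
An entirely analogous pair of identities holds for $a > b$ with the roles of $y_+$ and $y_-$ swapped.

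Next, I would take the geometric mean of the two factorizations of $\Lambda^1_{a,b}(x)$. The denominator factor becomes
\[
f(a, b) := \frac{1}{\sqrt{\Lambda^{2, 1}_{y_+, \, a \wedge b}(a \vee b)\,\Lambda^{2, 1}_{y_-, \, a \vee b}(a \wedge b)}},
\]
which is manifestly symmetric in $(a, b)$ and, as a direct check shows, coincides with the analogous object arising in the $a > b$ case. The remaining numerator factor splits cleanly into a piece depending only on $(b, x)$, yielding the candidate $g(b, x) := \sqrt{\Lambda^1_{y_+, x}(b)\,\Lambda^{2, 1}_{b, y_-}(x)}$ when $b > x$ (and a mirror expression when $b < x$, drawn from the $a>b$ geometric mean), and a piece depending only on $(a, x)$, which defines $h(x, a)$.

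The main obstacle will be verifying that the candidate $g$ so constructed is symmetric under swapping its two arguments, i.e.\ $g(b, x) = g(x, b)$. This reduces to establishing the identity
\[
\Lambda^1_{y_+, x}(b)\,\Lambda^{2, 1}_{b, y_-}(x) = \Lambda^1_{y_-, b}(x)\,\Lambda^{2, 1}_{x, y_+}(b),
\]
which I would prove using Proposition 2.5 (relating $\Lambda^{n, j}_{a, b}$ and $\Lambda^{n, n-j+1}_{b, a}$) together with the two product forms of Proposition 2.4 applied to the three-site density $\Lambda^3_{y_-, y_+}$: equating the left- and right-expansions of that density produces precisely the relation needed to convert one side of the identity into the other. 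Once this symmetry of $g$ is secured, the claimed factorization $\Lambda^1_{a, b}(x) = f(a, b)\,g(b, x)\,h(x, a)\,\mathds{1}(x \in I_{a, b})$ holds on all of $(z_1, z_2)^2$ by construction, with $f$ and $g$ symmetric.
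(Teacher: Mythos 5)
Your overall strategy — obtaining two factorizations of $\Lambda^1_{a,b}$ from Lemma \ref{lemma:separability} (one with an anchor $y_+>z_2$, one, after invoking symmetry of $\Lambda^1$, with an anchor $y_-<z_1$) and then taking the geometric mean — runs parallel to the paper's, and the $f(a,b)$ you extract is indeed manifestly symmetric. The proof breaks down, however, at the assertion that the resulting $g$ is symmetric, i.e.\ at the identity
\[
\Lambda^1_{y_+, x}(b)\,\Lambda^{2, 1}_{b, y_-}(x) = \Lambda^1_{y_-, b}(x)\,\Lambda^{2, 1}_{x, y_+}(b).
\]
This identity is false. Test it on the uniform order-statistics family of Example \ref{ex:order_stat_twosidedmarkov}: for $y_-<a<x<b<y_+$ the left-hand side equals $\frac{2(x-y_-)}{(y_+-x)(b-y_-)^2}$ while the right-hand side equals $\frac{2(y_+-b)}{(b-y_-)(y_+-x)^2}$, and these agree only when $(x-y_-)(y_+-x)=(y_+-b)(b-y_-)$, which fails generically (take $y_-=0$, $y_+=10$, $x=3$, $b=5$). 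Hence no combination of Propositions \ref{prop:characterization_prod} and \ref{prop:relation_marginals} can produce it, and the geometric-mean $g$ you define is not symmetric.

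This is precisely the obstacle the paper's proof confronts rather than assumes away: its intermediate object $\psi(a,x)=\sqrt{g(a,x)h(x,a)}$ is likewise not symmetric, and the heart of the argument is the claim \eqref{eq:claim} that the ratio $\psi(y,x)/\psi(x,y)$ admits a separated product form in single-variable functions $c_1,c_2$, from which a genuinely symmetric $\Psi(x,y)=[c_1(x)\mathds{1}(x<y)+c_2(x)\mathds{1}(y<x)]\,\psi(y,x)$ is built and the residual asymmetry is absorbed into $h$. Your proposal is missing this correction step entirely; to repair it you would have to prove an analogue of \eqref{eq:claim} for your $g$ and introduce the corresponding correction factors, rather than asserting symmetry outright.
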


\begin{proof}
    Let $Z_1 > 0$ and $Z_2 > 0$ be such that $Z_1 < z_1$ and $z_2< Z_2$. Let $a,b \in (Z_1,Z_2)$ and assume $a \leq b$. Then
    $b \in I_{a,Z_2}$, hence proposition \ref{lemma:separability} gives us functions $\widehat{f}, \widehat{g}$ and $\widehat{h}$ such that 

    \begin{equation}
        \Lambda_{a,b}^1(x) = \widehat{f}(a,b) \cdot \widehat{g}(b,x) \cdot \widehat{h}(x,a) \cdot \mathds{1}(x \in I_{a,b}).
    \end{equation}

    \noindent
    Similarly we can assume $b \leq a$, then $b \in I_{Z_1,a}$, hence there are functions $\widetilde{f}, \widetilde{g}$ and $\widetilde{h}$ such that

    \begin{equation}
        \Lambda_{a,b}^1(x) = \widetilde{f}(a,b) \cdot \widetilde{g}(b,x) \cdot \widetilde{h}(x,a) \cdot \mathds{1}(x \in I_{a,b}).
    \end{equation}
    \newpage

    hence we can define $f,g$ and $h$ as 

    \begin{multicols}{3}
        \begin{equation*}
            f(a,b)=
            \begin{cases}
                \widehat{f}(a,b) \quad \text{if } a < b\\
                \widetilde{f}(a,b) \quad \text{if } b < a
            \end{cases}
        \end{equation*}\break
        \begin{equation*}
            g(b,x)=
            \begin{cases}
                \widehat{g}(b,x) \quad \text{if } x < b\\
                \widetilde{g}(b,x) \quad \text{if } b < x
            \end{cases}
      \end{equation*}\break
      \begin{equation*}
            h(x,a)=
            \begin{cases}
                \widehat{h}(x,a) \quad \text{if } a < x\\
                \widetilde{g}(x,a) \quad \text{if } x < a
            \end{cases}
      \end{equation*}
\end{multicols}

\noindent
This show the first part of the lemma. we proceed with general $a,b \in I_{Z_1,Z_2}$ and $x \in I_{a,b}$
Now we only have to show that we can choose $f$ and $g$ symmetric. We define $\psi(a,x) = \sqrt{g(a,x) \cdot h(x,a)}$ and $\phi(a,b) = \sqrt{f(a,b) \cdot f(b,a)}$. Notice that, using the symmetry of the family $\{\Lambda^n\}_{n\geq1}$,

\begin{align}
\label{eq:construction_phi_psi}
    \Lambda_{a,b}^1(x) &= \sqrt{\Lambda_{a,b}^1(x)\cdot \Lambda_{b,a}^1(x)}\\
    &= \sqrt{f(a,b) \cdot f(b,a)} \cdot \sqrt{g(a,x) \cdot h(x,a)} \cdot  \sqrt{g(b,x) \cdot h(x,b)} \nonumber \\
    &=\phi(a,b) \cdot \psi(a,x) \cdot \psi(b,x). \nonumber
\end{align}

\noindent
We now restrict $a,b$ to 
$(z_1,z_2)$ and claim that $\psi(y,x) / \psi(x,y)$ is factorizable in $x$ and $y$ given the order of $x$ and $y$. More precisely, we claim that

\begin{equation}\label{eq:claim}
    \frac{\psi(y,x)}{\psi(x,y)} = \frac{c_2(y) \mathds{1}(x<y) + c_1(y) \mathds{1}(y<x)}{c_1(x) \mathds{1}(x<y) + c_2(x) \mathds{1}(y<x)}
\end{equation}

\noindent
for some functions $c_1$ and $c_2$ and $x \neq y$, $x,y \in (z_1,z_2)$. Before we prove this claim we demonstrate how we will use it. If the claim holds, then we have

\begin{align}
    \Psi(x,y) :&= [c_1(x) \mathds{1}(x<y) + c_2(x) \mathds{1}(y<x)] \cdot \psi(y,x)\\
    &= [c_2(y) \mathds{1}(x<y) + c_1(y) \mathds{1}(y<x)] \cdot \psi(x,y) = \Psi(y,x) \nonumber
\end{align}

\noindent
for all $x,y \in (z_1,z_2)$ with $x \neq y$. As a consequence

\begin{align} \label{eq:lambda_with_symm_Psi}
    \Lambda_{a,b}^1(x) &= \phi(a,b) \cdot \psi(a,x) \cdot \psi(b,x) \cdot \mathds{1}(x \in I_{a,b}) \\
    &= \phi(a,b) \cdot \Psi(b,x) \cdot  [c_1(x) \cdot \mathds{1}(x<b) + c_2(x) \cdot \mathds{1}(b<x)]^{-1} \psi(a,x) \cdot \mathds{1}(x \in I_{a,b})\nonumber\\
    &= \phi(a,b) \cdot \Psi(b,x) \cdot  \gamma(a,x) \cdot \mathds{1}(x \in I_{a,b})\nonumber, 
\end{align}

where we define $\gamma$, for $x \in I_{a,b}$, as

\begin{align} \label{eq:def_gamma}
    \gamma(a,x) :&= [c_1(x) \cdot \mathds{1}(a<x) + c_2(x) \cdot \mathds{1}(x<a)]^{-1} \psi(a,x)\\
    &= [c_1(x) \cdot \mathds{1}(x<b) + c_2(x) \cdot \mathds{1}(b<x)]^{-1} \psi(a,x). \nonumber
\end{align}

\noindent
Notice that the second equality in \eqref{eq:def_gamma} relies on the fact that $x \in I_{a,b}$. Indeed, for $x \in I_{a,b}$ we have $a<x \iff x<b$ and likewise $b<x \iff x<a$. We can pick $f = \phi$, $g = \Psi$ and $h = \gamma$. Since $\phi$ and $\Psi$ are symmetric, this completes the proof.

Finally, we prove the claim \eqref{eq:claim}.
Recall that \eqref{eq:construction_phi_psi} holds on the whole interval $(Z_1,Z_2)$ rather than only on the subinterval $(z_1,z_2)$. Let $x,y\in I_{z_1,z_2}$. If we have $x < y$ then 

\begin{equation}
    \frac{\Lambda^{2,1}_{z_1,z_2}(x)}{\Lambda^{2,1}_{z_2,z_1}(y)} = \frac{\Lambda^{1}_{z_1,y}(x)}{\Lambda^{1}_{z_2,x}(y)} = \frac{\phi(z_1,y) \cdot \psi(z_1,x) \cdot \psi(y,x)}{\phi(z_2,x) \cdot \psi(z_2,y) \cdot \psi(x,y)}.
\end{equation}

\noindent
Define $C_{1,2}(x) := \frac{\psi(z_1,x)}{\Lambda^{2,1}_{z_1,z_2}(x) \cdot \phi(z_2,x)}$ and $C_{2,1}(y) := \frac{\psi(z_2,y)}{\Lambda^{2,1}_{z_2,z_1}(y) \cdot \phi(z_1,y)}$. The equation above reads then

\begin{equation}
    \frac{C_{2,1}(y)}{ C_{1,2}(x)} = \frac{\psi(y,x)}{\psi(x,y)}.
\end{equation}

Now assume $y < x$, similarly we find

\begin{equation}
    \frac{C_{1,2}(y)}{C_{2,1}(x)} = \frac{\psi(y,x)}{\psi(x,y)}.
\end{equation}

Combining these two expressions yields

\begin{equation}
    \frac{\psi(y,x)}{\psi(x,y)} = \frac{C_{2,1}(y) \cdot \mathds{1}(x<y) + C_{1,2}(y) \cdot \mathds{1}(y<x)}{C_{1,2}(x) \cdot \mathds{1}(x<y) + C_{2,1}(x) \cdot \mathds{1}(y<x)}
\end{equation}

\noindent
We define $c_1 = C_{1,2}$ and $c_2 = C_{2,1}$ to obtain the required form. This proves \eqref{eq:claim}.
\end{proof}
We can now state a characterization of the $\Lambda^1$ which generate a symmetric two-sided Markov family. 
\begin{theorem}
\label{thm:characterization_two-sided_symmetric}


Consider a one-site density $\Lambda^{(1)}$. The following two statements are equivalent:

\begin{enumerate}
    \item There exists a symmetric and two-sided Markov family $\{\Lambda^n\}_{n \geq 0}$ with one-site density $\Lambda^{(1)}$.

    \item $\forall z_1,z_2 \in \mathbb{R}^+$, $a,b \in I_{z_1,z_2}$ there exists a symmetric $g: \mathbb{R}^2 \rightarrow \mathbb{R}^+$ such that 

    \begin{equation}
    \label{eq:form_generating}
        \Lambda^1_{a,b} (x) = f(a,b) \cdot g(b,x) \cdot g(x,a) \cdot \mathds{1}(x \in I_{a,b})
    \end{equation}

    with normalization

    \begin{equation}\label{eq:normalization}
        f(a,b) = \Bigg (\int_{a \wedge b}^{a \vee b}  g(b,y) \cdot g(y,a) dy \Bigg)^{-1}.
    \end{equation} 
\end{enumerate}

\end{theorem}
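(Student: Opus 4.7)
The plan is to prove the two implications separately: $(1)\Rightarrow(2)$ upgrades the factorisation of Lemma~\ref{lemma:symmetric_f_g} so that the same symmetric function appears in both slots, while $(2)\Rightarrow(1)$ constructs the whole Markov family from the single-site density via a product ansatz over consecutive pairs.

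For $(1)\Rightarrow(2)$, Lemma~\ref{lemma:symmetric_f_g} already provides, on any $I_{z_1,z_2}$, a representation
\[
\Lambda^1_{a,b}(x) = \hat f(a,b)\,\hat g(b,x)\,\hat h(x,a)\,\mathds{1}(x\in I_{a,b})
\]
with $\hat f$ and $\hat g$ symmetric. Imposing $\Lambda^1_{a,b} = \Lambda^1_{b,a}$, cancelling the symmetric factors $\hat f$ and $\hat g$, and using $\hat g(b,x)=\hat g(x,b)$, one obtains $\hat g(x,b)\hat h(x,a) = \hat g(x,a)\hat h(x,b)$, so that the ratio $k(x):=\hat h(x,a)/\hat g(x,a)$ does not depend on $a$. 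Setting $g(u,v):=\hat g(u,v)\sqrt{k(u)k(v)}$ and $f(a,b):=\hat f(a,b)/\sqrt{k(a)k(b)}$, both symmetric, a direct computation gives
\[
f(a,b)\,g(b,x)\,g(x,a) = \hat f(a,b)\,\hat g(b,x)\,\hat g(x,a)\,k(x) = \Lambda^1_{a,b}(x)
\]
on the support. The normalisation \eqref{eq:normalization} is then forced by $\int\Lambda^1_{a,b}(x)\,dx=1$.

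For $(2)\Rightarrow(1)$, I build the family by
\[
\Lambda^n_{a,b}(\theta_1,\dots,\theta_n) := F_n(a,b)\,\prod_{i=0}^{n} g(\theta_i,\theta_{i+1})\,\mathds{1}(a<\theta_1<\cdots<\theta_n<b)
\]
(with the obvious analogue when $b<a$), where $\theta_0:=a$, $\theta_{n+1}:=b$, and $F_n(a,b)$ is the normalising constant; for $n=1$ this agrees with the hypothesis thanks to \eqref{eq:normalization}. To verify the two-sided Markov property, I fix $j$ and integrate out $\theta_i$ for $i\neq j$: the ordered indicator splits as $\mathds{1}(a<\theta_1<\cdots<\theta_{j-1}<\theta_j)\cdot\mathds{1}(\theta_j<\theta_{j+1}<\cdots<\theta_n<b)$, the product $\prod_{i=0}^n g(\theta_i,\theta_{i+1})$ factorises accordingly, and the Fubini integrals collapse to $1/F_{j-1}(a,\theta_j)$ and $1/F_{n-j}(\theta_j,b)$ by the normalisation of $\Lambda^{j-1}_{a,\theta_j}$ and $\Lambda^{n-j}_{\theta_j,b}$. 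Dividing $\Lambda^n_{a,b}$ by the resulting marginal returns exactly $\Lambda^{j-1}_{a,\theta_j}\cdot\Lambda^{n-j}_{\theta_j,b}$. The support of $\Lambda^1_{a,b}$ is $I_{a,b}$ by construction, and symmetry of the family follows from the symmetry of $g$: reversing $(\theta_1,\dots,\theta_n)$ and swapping $(a,b)$ leaves both the product and the indicator invariant, while $F_n(a,b)=F_n(b,a)$ by the same change of variables.

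The main obstacle is the forward step of upgrading $\hat h$ into a symmetric $g$. Lemma~\ref{lemma:symmetric_f_g} only provides symmetry of $\hat f$ and $\hat g$, and it is precisely the additional symmetry $\Lambda^1_{a,b}=\Lambda^1_{b,a}$ that forces the defect $\hat h/\hat g$ to depend on $x$ alone; the trick of multiplying $\hat g$ by $\sqrt{k(\cdot)k(\cdot)}$ then symmetrises $g$ without spoiling the symmetry of $f$. The backward step is largely bookkeeping, the only delicate point being the parallel handling of $a<b$ and $a>b$, which is absorbed into the indicator $\mathds{1}(x\in I_{a,b})$ together with the symmetry of $g$.
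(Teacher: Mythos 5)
Your proof is correct and, for the forward implication, follows essentially the same line as the paper: start from Lemma~\ref{lemma:symmetric_f_g}, use the symmetry of $\Lambda^1$ together with the symmetry of $\hat f$ and $\hat g$ to conclude that $k(x):=\hat h(x,a)/\hat g(x,a)$ is independent of $a$, and then rebalance by the factor $\sqrt{k(\cdot)k(\cdot)}$. This is exactly the $c(x)$-argument in the paper.

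For the converse, however, you take a genuinely more direct route. The paper proceeds via the incremental scheme of Remark~\ref{rmk:construction}: it postulates explicit factorized ansätze for $\Lambda^{m+1,1}$ and $\Lambda^{m+1,m+1}$, proves that the resulting normalization $f_{m+1}$ is symmetric by unrolling the recursion into an iterated integral, and then checks the gluing identity of Remark~\ref{rmk:construction} so that Proposition~\ref{prop:characterization_prod} delivers the family. You instead write the $n$-site density in closed form,
\begin{equation*}
\Lambda^n_{a,b}(\theta_1,\dots,\theta_n) = F_n(a,b)\,\prod_{i=0}^{n} g(\theta_i,\theta_{i+1})\,\mathds{1}\bigl(\theta_i \in I_{\theta_{i-1},b}\ \forall i\bigr),
\end{equation*}
which is precisely what the paper's recursion produces after it has been fully unrolled (compare the iterated-integral expression for $[f_{m+1}(a,b)]^{-1}$ in the paper's proof and the joint density \eqref{dirichletjoint}), and then verify the two-sided Markov property for a general middle index $j$ by splitting the ordered indicator and the telescoping product and letting the two Fubini blocks normalize to $1/F_{j-1}(a,\theta_j)$ and $1/F_{n-j}(\theta_j,b)$. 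This bypasses Remark~\ref{rmk:construction} and Proposition~\ref{prop:propagation_sym} entirely (symmetry of the family is immediate from the reversal symmetry of the product), at the cost of having to check the defining property \eqref{eq:two-sided_Markov_def} at every $j$ rather than only at $j=1,n$. One small point you should make explicit, as the paper also implicitly assumes it, is that the iterated normalizations $F_n(a,b)$ are finite; positivity of $g$ alone does not guarantee this, and it should be taken as part of the standing integrability assumption on $g$.
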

\begin{proof}
{\bf 1 implies 2.}
Lemma \ref{lemma:symmetric_f_g} states that we can write 

\begin{equation}
    \Lambda^1_{a,b} (x) = \widetilde{f}(a,b) \cdot \widetilde{g}(b,x) \cdot \widetilde{h}(x,a) \cdot \mathds{1}(x \in I_{a,b})
\end{equation}

\noindent
with $\widetilde{f}$ and $\widetilde{g}$ symmetric. Using the symmetry of $\Lambda^1$, we can deduce that $\widetilde{h}$ is of the form $\widetilde{h}(x,a) = c(x) \cdot \widetilde{g}(x,a)$ for some function $c$. Indeed, the symmetry gives

\begin{equation}
    \widetilde{f}(a,b) \cdot \widetilde{g}(b,x) \cdot \widetilde{h}(x,a) = \widetilde{f}(b,a) \cdot \widetilde{g}(a,x) \cdot \widetilde{h}(x,b)
\end{equation}

for $x \in I_{a,b}$. Using the symmetry of $\widetilde{f}$, be obtain

\begin{equation}
    \frac{\widetilde{g}(b,x)}{\widetilde{h}(x,b)}  = \frac{\widetilde{g}(a,x)}{\widetilde{h}(x,a)} =:c^{-1}(x).
\end{equation}

\noindent
This then gives us $\widetilde{h}(x,b) = c(x)\cdot \widetilde{g}(b,x)$. We define $g(a,x):= \sqrt{c(x) \cdot c(a)} \cdot \widetilde{g}(a,x)$ and $f(a,b): = \widetilde{f}(a,b) / \sqrt{c(a) \cdot c(b)}$. Notice that $f$ and $g$ are symmetric. We can now to write $\Lambda^1$ in the form

\begin{equation}
    \Lambda^1_{a,b} (x) =  f(a,b) \cdot g(a,x) \cdot g(x,b).
\end{equation}

\noindent
Since $\Lambda_{a,b}^1$ is a probability density, $f$ must indeed have the form \eqref{eq:normalization}.

{\bf 2 implies 1.}
We define the family $\{\Lambda^{n}\}_{n \geq 1}$ recursively. Let $z_1,z_2 \in \mathbb{R}^+$ and assume that for some $m \geq 1$, the first marginals $\Lambda^{1,1},...,\Lambda^{m,1}$ have the form 

\begin{equation}
    \Lambda_{a,b}^{k,1}(x) = f_k(a,b) \cdot g_k(b,x) \cdot h_k(x,a)\cdot \mathds{1}(x \in I_{a,b})
\end{equation}

\noindent
for $a,b \in (z_1,z_2)$, and $1 \leq k \leq m$ with $f_k$ and $g_k$ symmetric. Notice that this is true for $\Lambda^1$ with $f_1(a,b) := f(a,b)$, $g_1(b,x) := g(b,x)$ and $h_1(x,a) = g(x,a)$. We define $\Lambda_{a,b}^{m+1,1}$ and $\Lambda_{a,b}^{m+1,m+1}$ as 

\begin{equation}
    \Lambda_{a,b}^{m+1,1}(x) = f_{m+1}(a,b) \cdot [f_m(b,x)]^{-1} \cdot h_m(x,a) \cdot\mathds{1}(x \in I_{a,b})
\end{equation}

and

\begin{equation}
    \Lambda_{a,b}^{m+1,m+1}(x) =  \Lambda_{b,a}^{m+1,1}(x) = f_{m+1}(b,a) \cdot [f_m(a,x)]^{-1} \cdot h_m(x,b) \cdot \mathds{1}(x \in I_{a,b})
\end{equation}

with $f_{m+1}$ the normalization

\begin{equation}
    f_{m+1}(a,b) = \Bigg(\int_{a \wedge b}^{a \vee b} [f_m(b,x)]^{-1} \cdot h_m(x,a) dx \Bigg)^{-1}.
\end{equation}

\noindent
We show that $f_{m+1}$ is symmetric. Assume $a \leq b$. Then, using the recursion,

\begin{align}
    &[f_{m+1}(a,b)]^{-1}\\
    &= \int_a^b g(a,x_1) \int_{x_1}^b   g(x_1,x_2) ... \int_{x_{m}}^b g(x_{m+1},b) dx_{m+1} ... dx_2 dx_1 \nonumber \\
    &= \int_a^b \int_{x_1}^b ... \int_{x_{m}}^b g(a,x_1) \cdot g(x_1,x_2) \cdot ... \cdot g(x_{m+1},b) dx_{m+1} ... dx_2 dx_1 \nonumber\\
    &= \int_a^b \int_a^b ... \int_a^b \mathds{1}(a \leq x_1 \leq x_2 \leq...\leq x_{m+1} \leq b) \prod_{i=0}^n  g(x_i,x_{i+1}) dx_1 dx_2 ... dx_{m+1} \nonumber
\end{align}

and

\begin{align}
    &[f_{m+1}(b,a)]^{-1}\\
    &= \int_a^b g(b,x_{m+1}) \int_a^{x_{m+1}} g(x_{m+1},x_m) ... \int_a^{x_2}\cdot g(x_1,a)  dx_1 dx_2 ... dx_{m+1} \nonumber \\
    &= \int_a^b \int_a^{x_{m+1}} ... \int_a^{x_2} g(b,x_{m+1}) \cdot g(x_{m+1},x_m) \cdot ... \cdot g(x_1,1) dx_1 dx_2 ... dx_{m+1}\nonumber\\
    &= \int_a^b \int_a^b ... \int_a^b \mathds{1}(a \leq x_1 \leq x_2 \leq...\leq x_{m+1} \leq b) \prod_{i=0}^n g(x_i,x_{i+1}) dx_1 dx_2 ... dx_{m+1} \nonumber
\end{align}

\noindent
where it is understood that $x_0 = a$ and $x_{m+2} = b$. This immediately gives the symmetry of $f_{m+1}$. We proceed to show that $\Lambda_{a,b}^{m+1,1}$ and $\Lambda_{a,b}^{m+1,m+1}$ indeed yield a density $\Lambda_{a,b}^{m+1}$ which is two sided Markov. For arbitrary $\theta_1,...,\theta_{m+1} \in \mathbb{R}^+$ we calculate

\begin{align}
    \Lambda_{a,b}^{m+1,1}(\theta_1) \cdot &\Lambda^{m}_{\theta_1,b}(\theta_2,...,\theta_{m+1}) = \Lambda_{a,b}^{m+1,1}(\theta_1) \cdot \Lambda^{m-1}_{\theta_1,\theta_{m+1}}(\theta_2,...,\theta_m) \cdot \Lambda_{\theta_1,b}^{m,m}(\theta_{m+1})\\
    =& f_{m+1}(a,b) \cdot [f_m(b,\theta_1)]^{-1} \cdot h_m(\theta_1,a) \cdot \Lambda^{m-1}_{\theta_1,\theta_{m+1}}(\theta_2,...,\theta_m) \nonumber
    \\\cdot &f_m(b,\theta_1) \cdot g_m(\theta_1,\theta_{m+1}) \cdot h_m(\theta_{m+1,b}) \cdot \mathds{1}(\theta_1 \in I_{a,b}, \theta_{m+1} \in I_{\theta_1,b}) \nonumber\\
    =&f_{m+1}(a,b) \cdot  h_m(\theta_1,a) \cdot \Lambda^{m-1}_{\theta_1,\theta_{m+1}}(\theta_2,...,\theta_m) \nonumber
    \\\cdot &g_m(\theta_1,\theta_{m+1}) \cdot h_m(\theta_{m+1},b) \cdot \mathds{1}(\theta_1 \in I_{a,b}, \theta_{m+1} \in I_{\theta_1,b}) \nonumber
\end{align}

and

\begin{align}
    \Lambda^{m}_{a,\theta_{m+1}}(\theta_1,...,\theta_{m}) \cdot &\Lambda_{a,b}^{m+1,m+1}(\theta_{m+1})  = \Lambda_{a,\theta_{m+1}}^{m,1}(\theta_1) \cdot \Lambda^{m-1}_{\theta_1,\theta_{m+1}}(\theta_2,...,\theta_m) \cdot \Lambda_{a,b}^{m+1,m+1}(\theta_{m+1})\\
    =& f_m(a,\theta_{m+1}) \cdot g_m(\theta_{m+1}, \theta_1) \cdot h_m(\theta_1,a) \cdot \Lambda^{m-1}_{\theta_1,\theta_{m+1}}(\theta_2,...,\theta_m)\nonumber \\
    \cdot &f_{m+1}(b,a) \cdot [f_m(a,\theta_{m+1})]^{-1} \cdot h_m(\theta_{m+1},b) \cdot \mathds{1}(\theta_1 \in I_{a,\theta_{m+1}}, \theta_1 \in I_{a,b}) \nonumber\\
    =&f_{m+1}(a,b) \cdot  h_m(\theta_1,a) \cdot \Lambda^{m-1}_{\theta_1,\theta_{m+1}}(\theta_2,...,\theta_m) \nonumber
    \\\cdot &g_m(\theta_1,\theta_{m+1}) \cdot h_m(\theta_{m+1},b) \cdot \mathds{1}(\theta_1 \in I_{a,b}, \theta_{m+1} \in I_{\theta_1,b}). \nonumber
\end{align}

\noindent
Observe that these two are equal. As is explained in remark \ref{rmk:construction}, one can use proposition \ref{prop:characterization_prod} to construct a $\Lambda^{m+1}$ for which the two-sided Markov property holds. Since $\Lambda^1$ is clearly symmetric, the whole family $\{\Lambda^n\}_{n\geq1}$ is symmetric as is shown in proposition \ref{prop:propagation_sym}. This concludes the proof.

\end{proof}

\noindent
We explicitly state the recursion from the proof of theorem \ref{thm:characterization_two-sided_symmetric}. This is done in the definition below, alongside the introduction of some terminology.

\begin{definition}
\label{def:generating_recursion}
    $\Lambda^1$ is called a generating density if it is of the form in \eqref{eq:form_generating} for all $z_1,z_2 \in \mathbb{R}^+$ such that $z_1 < z_2$ and $a,b \in (z_1,z_2)$. The unique symmetric two-sided Markov family $\{\Lambda^n\}_{n \geq 1}$ with first marginals

    \begin{equation}
        \Lambda_{a,b}^{n,1}(x) = f_n(a,b) \cdot g_n(b,x) \cdot h_n(x,a) \cdot \mathds{1}(x \in I_{a,b})
    \end{equation}

    \noindent
    is called the generated family. The functions $f_n,g_n$ and $h_n$ on $\mathbb{R}^+$ are defined via the recursion

    \begin{equation}
    \label{eq:recursion}
        \begin{cases}
            f_{n+1} (a,b) = \big( \int_{a \wedge b}^{a \vee b} [f_{n}(b,x)]^{-1} \cdot h_1(x,a) dx \big)^{-1} \\[2ex]
            g_{n+1} (b,x) = [f_n(a,x)]^{-1} \\[2ex]
            h_{n+1}(x,a) = h(x,a) = g(x,a)
        \end{cases}
    \end{equation}
    \noindent
    with $f_1 = f$ and $g_1 = g$. For fixed $z_1,z_2$ we call $(f_n,g_n,h_n)$ the $n$-th order factors generated by $\Lambda^{1}$. The functions $f_n$ and $g_n$ are symmetric for all $n \geq 1$, as is shown in the proof of theorem \ref{thm:characterization_two-sided_symmetric}. 
\end{definition}

\begin{remark}
    Notice that in the definition above, the $n$-th order factors are all constructed from just one function. Indeed, given only $g_1$, we know $h_1$ and $f_1$ since $h_1$ is simply defined to be equal to $g_1$ and $f_1(a,b)$ is the normalization of $g_1(b,x)\cdot h_1(x,a)$. The recursive construction gives the higher order factors.
\end{remark}


\begin{example}
\label{ex:recursion_order_stat}
\normalfont
We verify that the density of the order statistics are indeed recursively generated according to \eqref{eq:recursion}. \eqref{eq:marginals_order_stat} evaluated in $i=1$ yields

\begin{equation}
    \Lambda_{a,b}^{n,1}(x) = n \cdot |b-a|^{-n} |b-x|^{n-1}\mathds{1}(x \in I_{a,b}).
\end{equation}

 This expression is clearly factorizable for $x \in I_{a,b}$, we can take

\begin{equation}
    f_n(a,b) = n!|b-a|^{-n}, \quad g_n(b,x) = \frac{|b-x|^{n-1}}{(n-1)!}, \quad h_n(x,a) = 1.
\end{equation}

\noindent
We immediately see that $f_1(a,b) = |b-a|^{-1}$ and $g_1(b,x) = 1$ are symmetric, hence $\Lambda^1$ is indeed generating. Moreover $f_n$ and $g_n$ are symmetric and satisfy the recursion,

\begin{equation}
    f_{n+1}^{-1}(a,b) = \int_{a \wedge b}^{a \vee b} h_1(x,a) \cdot f^{-1}_n(b,x) dx = \int_{a \wedge b}^{a \vee b} \frac{|b-x|^n}{n!} dx = \frac{|b-a|^{n+1}}{(n+1)!}
\end{equation} 

and

\begin{equation}
    g_{n+1}(b,x) = [f_n(b,x)]^{-1} = \frac{|b-x|^n}{n!}.
\end{equation}

\noindent
We conclude that $f_n,g_n$ and $h_n$ are indeed the $n$-th order factors generated by $\Lambda^1$. 

\end{example}

\begin{example}[gapped order statistics]
    \normalfont
    In example \ref{ex:recursion_order_stat} we reconstructed the joint density corresponding to the order statistics of uniform random variables. In this case we had $g_1(a,b) = 1 $, defining the whole family. Since $g_1$ is indeed symmetric strictly positive, we could have directly seen that the order statistics are symmetric and two-sided Markov. In fact we recover related families if we take $g_1(a,b) = [(s-1)!]^{-1} \cdot |b-a|^{s-1}$ for any integer $s \geq 1$. For this choice of $g_1$ we obtain the so-called \emph{gapped} order statistics of the uniform random variables. These are defined as follows. For fixed $n\geq 1$ and $a,b \in \mathbb{R}^+$ we take $N = s(n+1)-1$ and consider $X_1, X_2,...,X_N$ $\sim U(a \wedge b, a \vee b)$. Then $\Theta_1 := X_{s:N}, \Theta_2 := X_{2s:N},..., \Theta_n := X_{ns:N}$ are the gapped order statistics with gap size $s$. For $a < b$, their joint densities and first marginals are known \cite{yamada2000quantitative} to be

    \begin{equation}\label{gappedjoint}
        \Lambda_{a,b}^n(\theta_1,...,\theta_n) = \frac{N!}{[(s-1)!]^n} \cdot \frac{1}{|b-a|^n} \prod_{i=1}^{n+1} (\theta_i-\theta_{i-1})^{s-1} \cdot \mathds{1}(a < \theta_1 < ... < \theta_n < b)
    \end{equation}
    
    and 
    
    \begin{equation}
        \Lambda_{a,b}^{n,1}(\theta) = \frac{(s(n+1)-1)!}{(s-1)!(sn-1)!} \cdot \frac{1}{|b-a|^{s(n+1)-1}} \cdot|b-\theta|^{sn-1} \cdot |b- \theta|^{s-1} \cdot |\theta-a|^{s-1} \mathds{1}(\theta \in I_{a,b})
    \end{equation}

    \noindent
    where it is understood that $\theta_0 = a$ and $\theta_{n+1} = b$. Akin to the density for $s=1$ in example \ref{ex:order_stat_twosidedmarkov}, the indicator is replaced by $\mathds{1}(b < \theta_n < ... < \theta_1 < a)$ in the case where $b < a$. One can check that this family is symmetric and two-sided Markov by directly verifying definition \ref{def:sym_two_sided}. We state the functions $f_n,$ $g_n$ and $h_n$ from the recursion in \eqref{eq:recursion}. The first marginals $\Lambda^{n,1}_{a,b}$ are indeed factorizable for

    \begin{equation}
        f_n(a,b) = \frac{(s(n+1)-1)!}{|b-a|^{s(n+1)-1}}, \quad g_n(b, \theta) = \frac{|b-\theta|^{sn-1}}{(sn-1)!}, \quad h_n(\theta,a) = \frac{|\theta-a|^{s-1}}{(s-1)!}.
    \end{equation}

    \noindent
    Take $n=1$. We can immediately see that $f_1,g_1$ and $h_1$ define a generating density. $f_n$ also  follows from the recursion since $g_n = f_{n-1}^{-1}$.
\end{example}

\begin{example}[Dirichlet processes]
\label{ex:Dirichlet_distribution}
    Notice that we can in fact generalize the example above by taking $s \in \mathbb{R}^+$ and defining

    \begin{equation}
        f_n(a,b) = \frac{\Gamma(s(n+1)))}{|b-a|^{s(n+1)-1}}, \quad g_n(b, \theta) = \frac{|b-\theta|^{sn-1}}{\Gamma(sn)}, \quad h_n(\theta,a) = \frac{|\theta-a|^{s-1}}{\Gamma(s)}.
    \end{equation}

    \noindent
     It is again easy to check that the recursion in \eqref{eq:recursion} works. These generalized distributions are known in literature under the name \emph{ordered Dirichlet distribution} 
     \cite{arnold2008first}. The joint density is then given by the analogue of \eqref{gappedjoint}:
     \begin{equation}\label{dirichletjoint}
        \Lambda_{a,b}^n(\theta_1,...,\theta_n) = \frac{\Gamma(s(n+1))}{[\Gamma(s)]^n} \cdot \frac{1}{|b-a|^n} \prod_{i=1}^{n+1} (\theta_i-\theta_{i-1})^{s-1} \cdot \mathds{1}(a < \theta_1 < ... < \theta_n < b)
    \end{equation}
     
\end{example}

\section{Characterizing properties of Dirichlet densities}

In this section we find characterizing properties for $\{\Lambda^n\}_{n \geq 1}$ corresponding to an ordered Dirichlet distribution, i.e., the family given in \eqref{dirichletjoint}. In Theorem \ref{thm:characterization_two-sided_symmetric}, we provided a characterization of those $\Lambda^1$ which are part of a two-sided symmetric Markov family 
$\{\Lambda^n, n\in\mathbb{N}\}$. 
Due to this characterization, it is easy to generate a large class of symmetric two-sided families. Indeed, we can perform the construction in definition \ref{def:generating_recursion} for an arbitrary integrable and symmetric function $g: \mathbb{R}^+ \times \mathbb{R}^+ \rightarrow \mathbb{R}^+$. However, we will prove that under natural assumptions such as scaling and shift invariance (detailed below), the only two-sided Markov families are the Dirichlet processes of example \ref{ex:Dirichlet_distribution}.  

\begin{definition}
\label{def:scale_shift_invar}
 The family $\{\Lambda^n\}_{n \geq 1}$ is 

 \begin{enumerate}
     \item  \textbf{Scale invariant} if for all $a,b, \gamma \in \mathbb{R}^+$, $n \geq 1$ and $\theta_1 \in I_{a,b}, \theta_2 \in I_{\theta_1,b},..., \theta_n \in I_{\theta_{n-1},b}$, 

     \begin{equation}
        \Lambda^n_{a,b}(\theta_1,...,\theta_n) = \gamma^n \cdot \Lambda_{\gamma \cdot a, \gamma \cdot b}^n (\gamma \cdot \theta_1,...,\gamma \cdot\theta_n).
    \end{equation}

    \item \textbf{Shift invariant} if, for all $a,b \in \mathbb{R}^+$, $\gamma > -(a \wedge b)$, $n \geq 1$ and $\theta_1 \in I_{a,b}, \theta_2 \in I_{\theta_1,b},..., \theta_n \in I_{\theta_{n-1},b}$

     \begin{equation}
        \Lambda^n_{a,b}(\theta_1,...,\theta_n) = \Lambda_{ a + \gamma, b + \gamma}^n (\theta_1 + \gamma,...,\theta_n + \gamma).
    \end{equation}
 \end{enumerate}

\end{definition}


We start with a useful lemma.
\begin{lemma}
\label{lemma:invariance_restriction}
Let $\phi:\mathbb{R}^2 \times \mathbb{R}^2\rightarrow \mathbb{R}$ be a continuous symmetric function which satisfies the following scale and shift properties

\begin{equation}
\begin{cases}
    \phi(\gamma \cdot x, \gamma \cdot y) = \widetilde{\lambda}(\gamma) \cdot \phi(x,y)\\
    \phi(x + \gamma, y + \gamma) = \widehat{\lambda}(\gamma) \cdot  \phi(x,y) 
\end{cases}
\end{equation}

for some continuous 
 functions $\widetilde{\lambda}, \widehat{\lambda}: \mathbb{R}^+ \rightarrow \mathbb{R}^+$. Then $\phi$ is of the form

\begin{equation}
    \phi(x,y) = t \cdot |x-y|^\sigma
\end{equation}
with $\sigma,t \in \mathbb{R}$. As a consequence,  $\widetilde{\lambda}(\gamma)=\gamma^\sigma$ and $\widehat{\lambda} = 1$.
\end{lemma}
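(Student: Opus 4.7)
The plan is to reduce the problem to Cauchy-type functional equations by iterating each transformation, then to exploit the commutativity of shift and scale to kill the shift-induced factors. First I iterate the shift: applying it twice and comparing to a single shift by $\gamma_1+\gamma_2$ yields
\[
\widehat{\lambda}_1(\gamma_1+\gamma_2) = \widehat{\lambda}_1(\gamma_1)\widehat{\lambda}_1(\gamma_2), \qquad \widehat{\lambda}_2(\gamma_1+\gamma_2) = \widehat{\lambda}_2(\gamma_1)+\widehat{\lambda}_2(\gamma_2),
\]
so by continuity $\widehat{\lambda}_1(\gamma)=e^{d\gamma}$ and $\widehat{\lambda}_2(\gamma) = c\gamma$ for some $c,d\in\mathbb{R}$. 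Iterating the scaling, and using $|\gamma x-\gamma y| = \gamma|x-y|$, produces
\[
\widetilde{\lambda}_1(\gamma_1\gamma_2) = \widetilde{\lambda}_1(\gamma_1)\widetilde{\lambda}_1(\gamma_2), \qquad \widetilde{\lambda}_2(\gamma_1\gamma_2) = \gamma_2\widetilde{\lambda}_2(\gamma_1)+\widetilde{\lambda}_2(\gamma_2).
\]
The first gives $\widetilde{\lambda}_1(\gamma)=\gamma^\sigma$; symmetrizing the second in $(\gamma_1,\gamma_2)$ forces $(\gamma_1-1)\widetilde{\lambda}_2(\gamma_2) = (\gamma_2-1)\widetilde{\lambda}_2(\gamma_1)$, hence $\widetilde{\lambda}_2(\gamma)=k(\gamma-1)$ for some $k\in\mathbb{R}$.

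The heart of the argument is the commutativity of a scaling by $\gamma$ with a shift by $\delta$. Performing scale-then-shift and, separately, shift-by-$\delta/\gamma$-then-scale both land at $(\gamma x + \delta, \gamma y + \delta)$; equating the two expressions and cancelling the common factor $\widetilde{\lambda}_1(\gamma)\,e^{\widetilde{\lambda}_2(\gamma)|x-y|}$ yields
\[
\widehat{\lambda}_1(\delta)\,e^{\gamma\widehat{\lambda}_2(\delta)|x-y|} = \widehat{\lambda}_1(\delta/\gamma)\,e^{\widehat{\lambda}_2(\delta/\gamma)|x-y|}.
\]
Matching prefactors shows that $\widehat{\lambda}_1$ is constant on $\mathbb{R}^+$, which combined with the additive Cauchy equation above gives $\widehat{\lambda}_1 \equiv 1$. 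Matching exponents gives $\gamma\,\widehat{\lambda}_2(\delta) = \widehat{\lambda}_2(\delta/\gamma)$; together with $\widehat{\lambda}_2(\gamma)=c\gamma$ this becomes $c\gamma^2 = \widehat{\lambda}_2(1)$ for every $\gamma>0$, forcing $c=0$ and therefore $\widehat{\lambda}_2 \equiv 0$.

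To conclude, I invoke the codomain constraint: since $\widetilde{\lambda}_2$ takes non-negative values and $k(\gamma-1)$ changes sign across $\gamma=1$, the only option is $k=0$, so $\widetilde{\lambda}_2 \equiv 0$. With $\widehat{\lambda}_1=1$ and $\widehat{\lambda}_2=0$ the function $\phi$ is genuinely translation invariant, so combined with $\phi(x,y)=\phi(y,x)$ one has $\phi(x,y) = V(|x-y|)$ for some continuous $V$. The surviving scale identity reads $V(\gamma r) = \gamma^\sigma V(r)$, and evaluating at $r=1$ gives $V(r) = t\,r^\sigma$ with $t = V(1)$, which is the claim. The main obstacle I anticipate is the bookkeeping in the commutativity step; the rest is routine manipulation of Cauchy equations together with the codomain observation that eliminates $\widetilde{\lambda}_2$.
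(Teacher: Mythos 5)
Your proof is correct but follows a genuinely different route from the paper's. You kill $\widehat{\lambda}_1$ and $\widehat{\lambda}_2$ via the conjugation identity (scale-then-shift equals shift-by-$\delta/\gamma$-then-scale), which is clean and self-contained; the paper instead computes an explicit representation $\phi(x,y)=\widehat{\lambda}_1(x)e^{\widehat{\lambda}_2(x)|y-x|}\widetilde{\lambda}_1(y-x)e^{\widetilde{\lambda}_2(y-x)}\phi(0,1)$ by chaining a shift and a scaling through the base point $(0,1)$, and then substitutes the parametric forms $\widetilde{\lambda}_1=\gamma^\sigma,\widetilde{\lambda}_2=u(\gamma-1),\widehat{\lambda}_1=e^{v\gamma},\widehat{\lambda}_2=w\gamma$ back into the scaling relation to match coefficients. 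The two approaches split on how $\widetilde{\lambda}_2$ is eliminated: you appeal to the codomain hypothesis $\widetilde{\lambda}_2:\mathbb{R}^+\to\mathbb{R}^+$ (since $u(\gamma-1)$ changes sign at $\gamma=1$, nonnegativity forces $u=0$), whereas the paper tries to get $u=0$ from the coefficient matching alone. Worth noting in your favour: the paper's explicit formula carries a slip (it writes $e^{\widetilde{\lambda}_2(y-x)|y-x|}$ where the scaling applied to the pair $(0,1)$ produces $e^{\widetilde{\lambda}_2(y-x)|0-1|}=e^{\widetilde{\lambda}_2(y-x)}$), and the extra $(y-x)$ in that exponent is precisely what makes the coefficient matching appear to force $u=0$; with the corrected exponent the function $\phi(x,y)=t|x-y|^\sigma e^{u(|x-y|-1)}$ satisfies both invariances with $\widetilde{\lambda}_2=u(\gamma-1)$, $\widehat{\lambda}_1=1$, $\widehat{\lambda}_2=0$, so some positivity-type hypothesis on $\widetilde{\lambda}_2$ really is needed, and your use of it is the honest move. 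Two small points to tighten: when you conclude $\widehat{\lambda}_1\equiv 1$ from constancy plus the multiplicative Cauchy equation you should note the constant solves $c=c^2$ and is nonzero (else $\phi\equiv 0$); and your equation $c\gamma^2=\widehat{\lambda}_2(1)$ comes from the special choice $\delta=\gamma$ — stating the general identity $c\gamma\delta=c\delta/\gamma$ and varying $\gamma$ is marginally cleaner.
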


\begin{proof}
First we will show that the functions $\widetilde{\lambda}$ and $\widehat{\lambda}$ are of the form $\widetilde{\lambda}(\gamma)=\gamma^\sigma$  and $\widehat{\lambda}(\gamma)= \exp(u \gamma)$. Later we argue that $u=0$ as in the statement of the lemma.\\

We have the following restriction on $\widetilde{\lambda}$ and $\widehat{\lambda}$. Let $\alpha, \beta \in \mathbb{R}^+$, then

    \begin{equation} \label{eq:restr_lambda_tilde}
        \widetilde{\lambda}(\alpha) \cdot \widetilde{\lambda}(\beta ) \cdot \phi(x,y) = \phi(\alpha \cdot \beta \cdot x, \alpha \cdot \beta \cdot y) = \widetilde{\lambda}(\alpha \cdot \beta) \cdot \phi(x,y)
    \end{equation}

    and 

    \begin{equation} \label{eq:restr_lambda_hat}
        \widehat{\lambda}(\alpha) \cdot \widehat{\lambda}(\beta) \cdot \phi(x,y) = \phi(\alpha + \beta + x, \alpha + \beta + y) = \widehat{\lambda}(\alpha + \beta) \cdot \phi(x,y).
    \end{equation}

    \noindent
   Let $x,y$ be such that $\phi(x,y) \neq 0$ (if such $x,y$ don't exists the lemma holds with $t=0$). We divide both sides of equations \eqref{eq:restr_lambda_tilde} and \eqref{eq:restr_lambda_hat} by $\phi(x,y)$ and obtain that

    \begin{equation}
    \label{eq:tilde_hat_restrictions}
               \widetilde{\lambda}(\alpha) \cdot \widetilde{\lambda}(\beta ) = \widetilde{\lambda}(\alpha \cdot \beta) \quad \text{and} \quad  \widehat{\lambda}(\alpha) \cdot \widehat{\lambda}(\beta) = \widehat{\lambda}(\alpha + \beta).
    \end{equation}

    \noindent
    The only functions which satisfy the restriction on $\widetilde{\lambda}$ are of the form $\widetilde{\lambda}(\gamma) = \gamma^\sigma$ with $\sigma \in \mathbb{R}$ and the only functions satisfying the restriction on $\widehat{\lambda}$ are of the form $\widehat{\lambda}(\gamma) = \exp(u \gamma)$ with $u \in \mathbb{R}$. Based on the form of $\widehat{\lambda}$ and $\widetilde{\lambda}$ we find the form of $\phi$. Assume without loss of generality that $x<y$. Notice that

    \begin{align}
        \phi(x,y) &= \widehat{\lambda}(x ) \cdot \phi(0, y-x) \\
        &= \widehat{\lambda}(x ) \cdot \widetilde{\lambda}( y - x ) \cdot \phi(0, 1 ) \nonumber \\
        &= t \cdot \exp(u x) \cdot |x-y|^\sigma, \nonumber
    \end{align}

    with $t:= \phi(0,1)$. We now verify if we indeed have the correct shifting and scaling properties for $\phi$. The scaling property states that the following two expressions should be equal for all $x,y$ and $\gamma$:

    \begin{equation}
        \phi(\gamma \cdot x, \gamma \cdot y) = \widetilde{\lambda} (\gamma) \cdot \phi(x,y) = \gamma^\sigma \cdot t \cdot \exp(u  x) \cdot   |x-y|^\sigma
    \end{equation}

    and

    \begin{equation}
        \phi(\gamma \cdot x, \gamma \cdot y) = t \cdot \exp(u \gamma x) \cdot \gamma^\sigma |x-y|^\sigma.
    \end{equation}

    This only true when $u = 0$. We now showed that $\phi(x,y)$ must have the form $\phi(x,y) = t \cdot |x-y|^\sigma$ as is stated in the lemma. It is straightforward to see that the shift property is satisfied for such functions $\phi$, regardless of what the values of $t$ and $\sigma$ are.

\end{proof}

\begin{theorem}
\label{thm:only_dirichlet}
     Let $\{\Lambda^n\}_{n \geq 1}$ be a symmetric, two-sided Markov family, i.e.,  $\Lambda^1$ is of the form \eqref{eq:form_generating}

    \begin{equation}
        \Lambda_{a,b}^1(x) = f(a,b) \cdot g(b,x) \cdot g(a,x)
    \end{equation}

        with 

    \begin{equation}
        f(a,b)^{-1} = \int_{a \wedge b}^{a \vee b} g(a,x) \cdot g(b,x) dx
    \end{equation}

    \noindent
    for some symmetric function $g:\mathbb{R}^+_0 \times \mathbb{R}^+_0 \rightarrow \mathbb{R}^+_0$. We assume $g(x,.)$ to be differentiable on $\mathbb{R}^+ \setminus \{x\}$ for all $x \in \mathbb{R}^+$.\\
    
    \noindent
    If $\{\Lambda^n\}_{n \geq 1}$ is both scale and shift invariant, then, for some $\sigma,t \in \mathbb{R}^+$,

    \begin{equation}
        g(a,x) = t \cdot |b-x|^{\sigma}.
    \end{equation}

    As a consequence we have, for $s = \sigma + 1$,

    \begin{equation}
         \Lambda_{a,b}^1(x) = f(a,b) \cdot  |b-x|^{s-1} \cdot |x-a|^{s-1}.
    \end{equation}

    \noindent
    In other words, we recover the density corresponding to the Dirichlet process with parameter $s$ from example \ref{ex:Dirichlet_distribution}.
\end{theorem}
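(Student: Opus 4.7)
The plan is to translate the scale and shift invariance of the family $\{\Lambda^n\}_{n\ge 1}$ into transformation properties of $g$ alone, and then invoke Lemma~\ref{lemma:invariance_restriction} applied to $\phi=g$. Since $\Lambda^1_{a,b}(x) = f(a,b)\, g(a,x)\, g(b,x)$, scale invariance gives, for every $x\in I_{a,b}$ and $\gamma>0$,
\[
f(a,b)\, g(a,x)\, g(b,x) = \gamma \cdot f(\gamma a, \gamma b)\, g(\gamma a,\gamma x)\, g(\gamma b,\gamma x).
\]
The first key observation is that $f(\gamma a,\gamma b)/f(a,b)$ does not depend on $x$, so the ratio
\[
R(\gamma,a,b) \;=\; \frac{g(\gamma a,\gamma x)\, g(\gamma b,\gamma x)}{g(a,x)\, g(b,x)}
\]
is constant in $x\in I_{a,b}$. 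Moreover $R$ is symmetric in $a,b$.

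Next I would extract the factored structure $R(\gamma,a,b)=c(\gamma)\beta(\gamma,a)\beta(\gamma,b)$. To see this, fix a reference point $b_0$ and two other values $b_1,b_2$; comparing $R(\gamma,a,b_1)/R(\gamma,a,b_2)$ via the definition shows it is independent of $a$, so $R(\gamma,a,b)=R(\gamma,a,b_0)\,Q(\gamma,b,b_0)$. Symmetry in $a,b$ then forces $R(\gamma,a,b_0)/Q(\gamma,a,b_0)$ to be independent of $a$, which produces the claimed symmetric product form. Substituting this back yields
\[
\frac{g(\gamma a,\gamma x)}{\beta(\gamma,a)\,g(a,x)}\cdot\frac{g(\gamma b,\gamma x)}{\beta(\gamma,b)\,g(b,x)} \;=\; c(\gamma),
\]
and since the left-hand side factors into an $a$-piece and a $b$-piece whose product is constant, each piece must be independent of its own variable (for fixed $x,\gamma$). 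This gives $g(\gamma a,\gamma x)=H(x,\gamma)\beta(\gamma,a)g(a,x)$ with $H(x,\gamma)^2=c(\gamma)$, so $H(x,\gamma)=\sqrt{c(\gamma)}$ is independent of $x$. Finally, symmetry of $g$ applied to $g(\gamma a,\gamma x)=g(\gamma x,\gamma a)$ forces $\beta(\gamma,a)=\beta(\gamma,x)$ for all $a,x$, so $\beta(\gamma,\cdot)\equiv\lambda(\gamma)$. Thus
\[
g(\gamma a,\gamma x) \;=\; L(\gamma)\, g(a,x), \qquad L(\gamma):=\sqrt{c(\gamma)}\,\lambda(\gamma).
\]
Running the exact same argument with shift invariance in place of scaling produces $g(a+\gamma,x+\gamma)=M(\gamma)\,g(a,x)$.

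These two multiplicative transformation laws are precisely the hypotheses of Lemma~\ref{lemma:invariance_restriction} for $\phi=g$, with $\tilde\lambda_1=L$, $\hat\lambda_1=M$, and $\tilde\lambda_2=\hat\lambda_2\equiv 0$; the differentiability assumption on $g(x,\cdot)$ plus positivity provide the continuity needed to invoke that lemma. The conclusion is $g(a,x)=t\,|x-a|^\sigma$ for some $t\in\mathbb{R}^+$ and $\sigma\in\mathbb{R}$. Integrability of $\Lambda^1_{a,b}$, i.e.\ finiteness of $\int_{a\wedge b}^{a\vee b}|x-a|^\sigma|b-x|^\sigma\,dx$, forces $\sigma>-1$, so setting $s:=\sigma+1\in\mathbb{R}^+$ recovers exactly the form of Example~\ref{ex:Dirichlet_distribution}. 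The main obstacle is the bookkeeping in Step~2–3 above: deriving the factorization $R=c\beta\beta$ and then peeling off the $x$-dependence of $H(x,\gamma)$ cleanly requires careful use of the symmetry of $g$ and the positivity (so that square roots are unambiguous), together with verifying that the ranges of $a,b,x$ implicit in ``$x\in I_{a,b}$'' are large enough to deduce the identities globally on $(\mathbb{R}^+)^2$ rather than merely on a particular interval.
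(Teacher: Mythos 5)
The approach — reducing scale and shift invariance of $\Lambda^1$ to a multiplicative transformation law for $g$ and then invoking Lemma~\ref{lemma:invariance_restriction} — is the right idea and is also what the paper does. However, your derivation of the transformation law contains a genuine gap at the factorization step, and the conclusion you extract from it ($\widetilde\lambda_2 = \widehat\lambda_2 = 0$) is in fact stronger than what is true.

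The quantity $R(\gamma,a,b) = g(\gamma a,\gamma x)g(\gamma b,\gamma x)/\bigl(g(a,x)g(b,x)\bigr)$ is indeed independent of $x$, but only when $x$ is allowed to vary inside $I_{a,b}$. Your cancellation of the $a$-dependent factors in $R(\gamma,a,b_1)/R(\gamma,a,b_2)$ requires evaluating both $R$'s at a \emph{common} $x\in I_{a,b_1}\cap I_{a,b_2}$, and that intersection is empty precisely when $a$ lies between $b_1$ and $b_2$. So the claimed $a$-independence fails on a nontrivial set of triples, and the subsequent ``symmetry in $a,b$'' step — which needs the roles of $a$ and $b$ to be interchangeable in the identity $R(\gamma,a,b)=R(\gamma,a,b_0)Q(\gamma,b)$ — is blocked, because the domain restrictions for $a$ and for $b$ are mutually exclusive. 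Concretely, the factorization $R(\gamma,a,b)=c(\gamma)\beta(\gamma,a)\beta(\gamma,b)$ is false in general: take $g(a,x)=e^{c|a-x|}$ with $c\neq 0$, which is a perfectly admissible symmetric, differentiable generating function (it produces $\Lambda^1_{a,b}(x)=|b-a|^{-1}\mathds{1}(x\in I_{a,b})$, the $s=1$ Dirichlet density, so the generated family is both scale and shift invariant). For this $g$ one computes $R(\gamma,a,b)=e^{-c(\gamma-1)|a-b|}$, and $|a-b|$ is not additively separable, so $R$ admits no decomposition $c(\gamma)\beta(\gamma,a)\beta(\gamma,b)$. Consequently your intermediate conclusion $g(\gamma a,\gamma x)=L(\gamma)g(a,x)$ is simply false for this $g$, even though the theorem's conclusion about $\Lambda^1$ holds.

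The underlying issue is a gauge freedom: replacing $g(a,x)$ by $e^{c|a-x|}g(a,x)$ changes $g$ but leaves $\Lambda^1$ (and hence the whole family, by uniqueness) unchanged, because the extra factor contributes only $e^{c|a-b|}$, absorbed into $f(a,b)$. Any argument that tries to derive $g(\gamma a,\gamma x)=L(\gamma)g(a,x)$ with no $|a-x|$-dependent correction must therefore fail, since the hypothesis is insensitive to this gauge. The paper's proof handles this correctly: it derives the scaling law \emph{with} the extra exponential, $g(\gamma a,\gamma x)=e^{-\widetilde\kappa_5(\gamma)}e^{-\widetilde\kappa_2(\gamma)|a-x|}g(a,x)$, by differentiating $\log\bigl(g(a,\theta)/g(\gamma a,\gamma\theta)\bigr)$ in $\theta$ and exploiting the sign change of the derivative across $\theta=a$, and only afterwards appeals to Lemma~\ref{lemma:invariance_restriction} to eliminate the exponential factor. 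To repair your argument you would need to retain the $|a-x|$-dependent term (i.e.\ allow $\widetilde\lambda_2,\widehat\lambda_2\neq 0$) and then let the lemma do its job, rather than forcing $\widetilde\lambda_2=\widehat\lambda_2=0$ at the outset.
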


\begin{proof}
    Notice that for $\theta_1,\theta_2 \in I_{a,b}$

    \begin{equation}
    \label{eq:start_eq}
        \frac{\Lambda^{1}_{a,b}(\theta_1)}{\Lambda^{1}_{a,b}(\theta_2)} = \frac{\Lambda^{1}_{\gamma \cdot a,\gamma \cdot b}(\gamma \cdot \theta_1)}{\Lambda^{1}_{\gamma \cdot a,\gamma \cdot b}(\gamma \cdot \theta_2)}.
    \end{equation}

    \noindent
    Bringing all the factors containing $\theta_1$ to the left hand side and all the terms containing $\theta_2$ to the right hand side shows that for any $\theta \in I_{a,b}$,

    \begin{equation}
    \label{eq:tilde_def_equation}
        \frac{g(a, \theta) \cdot g(b, \theta)}{g(\gamma \cdot a, \gamma \cdot \theta) \cdot g(\gamma \cdot b, \gamma \cdot \theta)} = \frac{\Lambda^{1}_{a,b}(\theta)}{\Lambda^{1}_{\gamma \cdot a,\gamma \cdot b}(\gamma \cdot \theta)} = \widetilde{\kappa}_1(a,b,\gamma).
    \end{equation}

    \noindent
    We define $\widetilde{g}(a,\theta):= \log [g(a,\theta)/g(\gamma \cdot a, \gamma \cdot \theta)]$ and claim that $\widetilde{g}$ does not depend on its arguments, i.e. $\widetilde{g}$ is a constant depending on $\gamma$ :

    \begin{equation} \label{eq:tilde_g_gamma_dep}
        \widetilde{g}(a,\theta) = \tilde{\kappa}_3(\gamma).
    \end{equation}

Using the definition of $\widetilde{g}$ in terms of $g$, this immediately yields the following scaling property for the function $g$: 

   \begin{equation}
        g(\gamma a, \gamma \theta) = e^{-\widetilde{\kappa}_3 (\gamma)} g(a,\theta).
   \end{equation}

    We now prove this claim. Differentiate both sides of \eqref{eq:tilde_def_equation} with respect to $\theta$ to obtain

    \begin{equation} \label{eq:after_diff}
        \widetilde{g}^{(0,1)}(a,\theta) + \widetilde{g}^{(0,1)}(b,\theta) = 0,
    \end{equation}

    \noindent
    where $\widetilde{g}^{(0,1)}(a,\cdot):= \frac{\partial g(a,t)}{\partial t}|_{t = \cdot}$ denotes the derivative of $ \widetilde{g}$ in the second entry. This expression is valid for all $\theta \in I_{a,b}$. We can deduct that

    \begin{equation} \label{eq:cases_tilde_g}
       \widetilde{g}^{(0,1)}(a,\theta) = \begin{cases}
             - \widetilde{g}^{(0,1)}(b,\theta) \quad &\text{for } \theta \in I_{a,b}\\
             +\widetilde{g}^{(0,1)}(b,\theta) \quad &\text{for } \theta \notin I_{a,b}
        \end{cases}.
    \end{equation}
 
    \noindent
    The first line of \eqref{eq:cases_tilde_g} follows directly from \eqref{eq:after_diff} and the second line can be understood as follows. Consider the case $a,b<\theta$ and let $c> \theta$. Then $\widetilde{g}^{(0,1)}(a,\theta) = -\widetilde{g}^{(0,1)}(c, \theta)$ and $\widetilde{g}^{(0,1)}(b,\theta) = -\widetilde{g}^{(0,1)}(c, \theta)$, so we indeed have $\widetilde{g}^{(0,1)}(a,\theta) = \widetilde{g}^{(0,1)}(b,\theta)$.\\

    We continue the proof of our claim under the assumption that $a < \theta$. As will become apparent later on, this is without loss of generality. For arbitrary $b_1,b_2> \theta$ we have $\theta \in I_{a,b_1}$ and $\theta \in I_{a,b_2}$. Equation \eqref{eq:cases_tilde_g} gives

    \begin{equation} \label{eq:b_1_b_2_>theta}
        -\widetilde{g}^{(0,1)}(b_1,\theta) = \widetilde{g}^{(0,1)}(a,\theta) = -\widetilde{g}^{(0,1)}(b_2,\theta).
    \end{equation}
    
    For arbitrary $b_1,b_2< \theta$ we have $\theta \notin I_{a,b_1}$ and $\theta \notin I_{a,b_2}$. In this case Equation \eqref{eq:cases_tilde_g} gives

    \begin{equation} \label{eq:b_1_b_2_<theta}
        \widetilde{g}^{(0,1)}(b_1,\theta) = \widetilde{g}^{(0,1)}(a,\theta) = \widetilde{g}^{(0,1)}(b_2,\theta).
    \end{equation}
    
   Equations \eqref{eq:b_1_b_2_<theta} and \eqref{eq:b_1_b_2_>theta} imply that $\widetilde{g}^{(0,1)}(b,\theta)$ is constant as a function of $b$ on both $(-\infty, \theta)$ and $(\theta, \infty)$. Moreover, they imply that the value on $(-\infty, \theta)$ is precisely minus the value on $(\theta, \infty)$. This means that $\widetilde{g}^{(0,1)}$ is of the form
   
   \begin{equation}
       \widetilde{g}^{(0,1)}(b,\theta) = \kappa_2(\gamma, \theta) \mathds{1}(\theta > b) - \kappa_2(\gamma,\theta) \mathds{1}(\theta < b).
   \end{equation}

   Let $\widetilde{\kappa}_2$ be the antiderivative of $\kappa_2$ with respect to $\theta$. The function $\widetilde{g}$ can then be written as

   \begin{equation} \label{eq:form_tilde_g}
       \widetilde{g}(b,\theta) = [\widetilde{\kappa}_2(\gamma, \theta) + \widetilde{\kappa}_{3,1}(\gamma)]\mathds{1}(\theta > b) + [-\widetilde{\kappa}_2(\gamma, \theta) + \widetilde{\kappa}_{3,2}(\gamma)] \mathds{1}(\theta < b),
   \end{equation}

   with $\widetilde{\kappa}_{3,1}(\gamma)$ and $\widetilde{\kappa}_{3,2}(\gamma)$ terms which only depend on $\gamma$. Notice that $\widetilde{g}$ is symmetric because, by assumption, $g$ is symmetric. Using this symmetry we obtain

   \begin{align} \label{eq:symmetry_restriction_tilde_g}
       \widetilde{g}(b, \theta) &= [\widetilde{\kappa}_2(\gamma, \theta) + \widetilde{\kappa}_{3,1}(\gamma)]\mathds{1}(\theta > b) + [-\widetilde{\kappa}_2(\gamma, \theta) + \widetilde{\kappa}_{3,2}(\gamma)] \mathds{1}(\theta < b)\\
       &= [\widetilde{\kappa}_2(\gamma, b) + \widetilde{\kappa}_{3,1}(\gamma)]\mathds{1}(\theta < b) + [-\widetilde{\kappa}_2(\gamma, b) + \widetilde{\kappa}_{3,2}(\gamma)] \mathds{1}(\theta > b) = \widetilde{g}(\theta,b).\nonumber
   \end{align}

   Equation \eqref{eq:symmetry_restriction_tilde_g} holds for all $\theta,b$ such that $\theta \neq b$. Evaluating \eqref{eq:symmetry_restriction_tilde_g} for $\theta<b$ and for $\theta>b$ yields twice the same equation which is hence valid for all $\theta,b$ such that $\theta \neq b$:

   \begin{equation}
       \widetilde{\kappa}_2(\gamma, \theta) + \widetilde{\kappa}_{3,1}(\gamma) = -\widetilde{\kappa}_2 (\gamma, b) + \widetilde{\kappa}_{3,2}(\gamma) 
   \end{equation}

   It immediately follows that $ \widetilde{\kappa}_2(\gamma, \cdot)$ is a constant only depending on $\gamma$. Hence we can define 

   \begin{equation}
       \widetilde{\kappa}_3(\gamma):= \widetilde{\kappa}_2(\gamma, \theta) + \widetilde{\kappa}_{3,1}(\gamma) = -\widetilde{\kappa}_2 (\gamma, b) + \widetilde{\kappa}_{3,2}(\gamma).
   \end{equation}

    We conclude the proof of our claim in \eqref{eq:tilde_g_gamma_dep} by writing \eqref{eq:form_tilde_g} in terms of $\widetilde{\kappa}_3$.\\

\noindent
   Notice that this scaling property for $g$ is derived from just the scale invariance. One can mutatis mutandis repeat the procedure above to derive a shift property. Indeed, in this proof we never used that $\gamma$ scales the arguments of $g$ except for the fact that this preserves the symmetry: $g(\gamma \cdot a, \gamma \cdot \theta) = g(\gamma \cdot \theta, \gamma \cdot a)$. However, the symmetry is also preserved when shifting the arguments: $g(\gamma + a, \gamma + \theta) = g(\gamma + \theta, \gamma + a)$. Therefore 
   the following scaling and shifting properties hold true,
   \begin{equation} \label{eq:shift_scale_invar_g}
   \begin{cases}
       g(a + \gamma, \theta + \gamma)= e^{-\widehat{\kappa}_3(\gamma)} \cdot g(a,\theta)\\
       g(\gamma \cdot a,\gamma \cdot \theta)= e^{-\widetilde{\kappa}_3(\gamma)} \cdot g(a,\theta)
   \end{cases} .
   \end{equation}
   
   \noindent
   Here $\widehat{\kappa}_3$ is some continuous function of $\gamma$ analogous to $\widetilde{\kappa}_3$. By virtue of lemma \ref{lemma:invariance_restriction}, we can conclude the proof.

\end{proof}

    A close inspection of the proofs of Lemma \ref{lemma:invariance_restriction} and Theorem \ref{thm:only_dirichlet} reveals that it is not enough to only assume one of the properties in \ref{def:scale_shift_invar}. That is, if one only assumes either shift or scale invariance, then there is a much wider class of symmetric two-sided Markov families. To illustrate this, we recover the specific form of the generating densities which give rise to scale, respectively shift, invariance.\\

    \noindent
    \textbf{Scale invariance:}
    Let $\{\Lambda^n\}_{n\geq1}$ be a symmetric two-sided Markov family as introduced in the statement of theorem \ref{thm:only_dirichlet} which only satisfies scale invariance. The proof of the theorem then shows that $g$ satisfies the second line of \eqref{eq:shift_scale_invar_g}, i.e.

    \begin{equation}
        g(\gamma \cdot a, \gamma \cdot \theta)= \widetilde{\lambda}(\gamma) \cdot g(a,\theta)
    \end{equation}

    \noindent
    for some continuous function $\widetilde{\lambda}(\gamma)$. The proof of lemma \ref{lemma:invariance_restriction} gives candidates for the form of $\widetilde{\lambda}$. Indeed, \eqref{eq:tilde_hat_restrictions} yields $\widetilde{\lambda}(\gamma) = \gamma^\sigma$ for some $\sigma \in \mathbb{R}$. Now we define $\varphi(x):= g(1,x) = g(x,1)$, for $x \in (0,1]$ and find

    \begin{equation}
        g(a,\theta) = (a \vee \theta)^\sigma \cdot g\Big(\frac{a}{a \vee \theta}, \frac{\theta}{a \vee \theta}\Big) = (a \vee \theta)^\sigma \cdot \varphi \Big(\frac{a \wedge \theta}{a \vee \theta}\Big).
    \end{equation}

    This yields the following generating density

    \begin{equation}
        \Lambda^1_{a,b}(\theta) = f(a,b) \cdot \theta^\sigma\cdot \varphi\Big(\frac{a\wedge b}{\theta}\Big) \varphi\Big(\frac{\theta}{a\vee b}\Big) \cdot \mathds{1}(\theta \in I_{a,b}).
    \end{equation}
    \noindent
     Here, $f(a,b)$ is the normalization of $\Lambda^1_{a,b}$. It is straightforward to check that this density is indeed scale invariant.\\

    \noindent
    \textbf{Shift invariance:}
    Let $\{\Lambda^n\}_{n\geq1}$ be a symmetric two-sided Markov family as introduced in the statement of theorem \ref{thm:only_dirichlet} which only satisfies shift invariance. The proof of theorem \ref{thm:only_dirichlet} shows that $g$ satisfies the first line of \eqref{eq:shift_scale_invar_g}, i.e.

    \begin{equation}
        g(a + \gamma, \theta + \gamma)= \widehat{\lambda}(\gamma) \cdot g(a,\theta).
    \end{equation}

    \noindent
    for some continuous function $\widehat{\lambda}(\gamma)$. The proof of lemma \ref{lemma:invariance_restriction} gives candidates for the form of $\widehat{\lambda}$. Indeed, \eqref{eq:tilde_hat_restrictions} yields $\widehat{\lambda}(\gamma) = \exp{u \gamma}$ for some $u \in \mathbb{R}$. Now we define $\varphi(x):= g(1,x) = g(x,1)$, for $x \in (0,1]$ and find

    \begin{equation}
        g(a,\theta) = e^{u \cdot (a \wedge \theta)} g(a - a \wedge \theta, \theta -a \wedge \theta) = e^{u \cdot (a \wedge \theta)} \cdot \varphi(|\theta-a|).
    \end{equation}

    This gives the following generating density

    \begin{equation}
        \Lambda^1_{a,b}(\theta) = f(a,b) \cdot e^{u \cdot \theta} \cdot \varphi(|\theta-a|) \varphi(|\theta - b|).
    \end{equation}

    \noindent
     Here, $f(a,b)$ is the normalization of $\Lambda^1_{a,b}$. It is straightforward to check that this density is indeed scale invariant.
\vspace{1cm}
\noindent
\textbf{Data Availability:}  Data sharing not applicable to this article as no datasets were generated or analysed during the current study. \\
\textbf{Declarations Competing interests:} The authors have no relevant financial or non-financial interests to disclose.

\bibliographystyle{plain}
\clearpage

\bibliography{Bibliography.bib}

\end{document}